\newtheorem{proposition}{Proposition}[section]
\newtheorem{theorem}[proposition]{Theorem}
\newtheorem{lemma}[proposition]{Lemma}
\newtheorem{corollary}[proposition]{Corollary}
\newtheorem{definition}[proposition]{Definition}
\newtheorem{remark}[proposition]{Remark}
\def\R{\mathfrak R}
\def\R{\mathfrak R}
\begin{document}
\date{}
\title{The perturbation of the group inverse under the stable perturbation in a unital ring}
\author{Fapeng Du\thanks{E-mail: jsdfp@163.com}\\
   School of Mathematical \& Physical Sciences, Xuzhou Institute of Technology\\
  Xuzhou 221008, Jiangsu Province, P.R. China\\
 \and
 Yifeng Xue\thanks{Corresponding author, E-mail: yfxue@math.ecnu.edu.cn} \\
 Department of mathematics and Research Center for Operator Algebras\\
 East China Normal University, Shanghai 200241, P.R. China
}
\maketitle
\begin{abstract}
Let $\R $ be a ring with unit $1$ and $a\in \R ,\, \bar{a}=a+\delta a\in \R $ such that $a^\#$ exists. In this paper,
we mainly investigate the perturbation of the group inverse $a^\#$ on $\R$. Under the stable perturbation, we obtain
the explicit expressions of $\bar{a}^\#$. The results extend the main results in \cite{YFX, CX} and some related results
in \cite{YX}. As an application, we give the representation of the group inverse of the matrix
$\begin{bmatrix}d&b\\ c&0\end{bmatrix}$ on the ring $\R$ for certain $d,\, b,\,c\in\R$.
\end{abstract}

\section{Introduction}
Let $\R $ be a ring with unit and $a \in \R $. we consider an element $b\in \R $ and the following
equations:
\begin{center}
$(1)$ $aba=a$, \quad $(2)$ $bab=b$, \quad $(3)$ $a^kba=a^k$, \quad $(4)$ $ab=ba$.
\end{center}
If $b$ satisfies $(1)$, then $b$ is called a pseudo--inverse or $1$--inverse of $a$. In this case, $a$ is called regular.
The set of all $1$--inverse of $a$ is denoted by $a^{\{1\}}$; If $b$ satisfies $(2)$, then $b$ is called a $2$--inverse of
$a$, and $a$ is called anti--regular. The set of all $2$--inverse of $a$ is denoted by $a^{\{2\}}$; If $b$ satisfies
$(1)$ and $(2)$, then $b$ is called the generalized inverse of $a$, denoted by $a^+$; If $b$ satisfies $(2)$, $(3)$ and
$(4)$, then $b$ is called the Drazin inverse of $a$, denoted by $a^D$. The smallest integer $k$ is called the index of
$a$, denoted by $ind(a)$. If $ind(a)=1$, we say $a$ is group invertible and $b$ is the group inverse of $a$, denoted by
$a^\#$.

The notation so--called stable perturbation of an operator on Hilbert spaces and Banach spaces is introduced
by G. Chen and Y. Xue in \cite{CX1,CWX}.  Later the notation is generalized to Banach Algebra by Y. Xue in \cite{YFX} and
to Hilbert $C^*$--modules by Xu, Wei and Gu in \cite{XWG}. The stable perturbation of linear operator was widely
investigated by many authors. For examples, in \cite{CX2}, G. Chen and Y. Xue study the perturbation for Moore--Penrose
inverse of an operator on Hilbert spaces; Q. Xu, C. Song and Y. Wei studied the stable perturbation of the Drazin inverse
of the square matrices when $I-A^\pi-B^\pi$ is nonsingular in \cite{QCY} and Q. Huang and W. Zhai worked over the
perturbation of closed operators in \cite{QHL,QH}, etc.. Some further results can be found in \cite{DX1,DX2,DX3,DX4}.

The Drazin inverse has many applications in matrix theory, difference equations, differential equations and iterative
methods. In 1979, Campbell and Meyer proposed an open problem: how to find an explicit expression for the Drazin inverse
of the matrix $\begin{bmatrix}A&B\\C&D\end{bmatrix}$ in terms of its sub-block in \cite{CDM}? The representation of the
Drazin inverse of a triangular matrix $\begin{bmatrix}A&B\\0&D\end{bmatrix}$ has been given in \cite{GK,DPS,HWY}. In
\cite{DW}, Deng and Wei studied the Drazin inverse of the anti-triangular matrix $\begin{bmatrix}A&B\\C&0\end{bmatrix}$
and given its representation under some conditions.

In this paper, we investigate the stable perturbation for the group inverse of an element in a ring. Assume that
$1-a^\pi-\bar{a}^\pi$ is invertible, we present the expression of $\bar{a}^\#$ and $\bar{a}^D$.  This extends the related
results in \cite{YX, CX}. As an applications, we study the representation for the group inverse of the
anti--triangular matrix $\begin{bmatrix}d&b\\c&0\end{bmatrix}$ on the ring.

\section{Some Lemmas}
Throughout the paper, $\R $ is always a ring with the unit $1$. In this section, we give some lemmas:

\begin{lemma}\label{L1}
Let $a,\,b \in \R $. Then $1+ab$ is invertible if and only if $1+ba$ is invertible. In this case,
$(1+ab)^{-1}=1-a(1+ba)^{-1}b$ and
$$
(1+ab)^{-1}a=a(1+ba)^{-1},\ b(1+ab)^{-1}=(1+ba)^{-1}b.
$$
\end{lemma}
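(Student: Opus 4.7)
\medskip
\noindent\textbf{Proof proposal.} The plan is to give a purely algebraic, self-contained argument that avoids any analytic or spectral considerations, since everything takes place in an abstract unital ring. The key observation that motivates the candidate inverse is the formal geometric series
\[
(1+ab)^{-1}=\sum_{n\ge 0}(-ab)^n=1-a\Bigl(\sum_{n\ge 0}(-ba)^n\Bigr)b=1-a(1+ba)^{-1}b,
\]
which of course is not rigorous in an abstract ring, but tells us exactly which element to try.

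First I would assume that $1+ba$ is invertible, set $u=(1+ba)^{-1}$, and define the candidate $v:=1-aub\in\R$. Then I would verify both one-sided identities by direct computation: using $(1+ba)u=u(1+ba)=1$ one gets
\[
(1+ab)v=(1+ab)(1-aub)=1+ab-aub-abaub=1+ab-a\bigl((1+ba)u\bigr)b=1+ab-ab=1,
\]
and a symmetric calculation for $v(1+ab)=1$. This proves that $1+ab$ is invertible and gives the displayed formula $(1+ab)^{-1}=1-a(1+ba)^{-1}b$. The converse implication follows at once by swapping the roles of $a$ and $b$, so the biconditional is established.

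For the two intertwining identities I would start from the trivial algebraic equality $(1+ab)a=a+aba=a(1+ba)$; multiplying on the left by $(1+ab)^{-1}$ and on the right by $(1+ba)^{-1}$ yields $(1+ab)^{-1}a=a(1+ba)^{-1}$. Similarly $b(1+ab)=(1+ba)b$ gives $b(1+ab)^{-1}=(1+ba)^{-1}b$.

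Honestly, I do not foresee a real obstacle here: the whole lemma is a Jacobson-type identity and each step is a one-line manipulation once the correct candidate $1-a(1+ba)^{-1}b$ is written down. The only point where a reader might want a line of motivation is the choice of this candidate, which I would justify by the formal power series heuristic above rather than trying to derive it \emph{ex nihilo}.
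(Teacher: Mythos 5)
Your proof is correct and is exactly the standard Jacobson-identity argument: the paper states Lemma~\ref{L1} without proof, and your verification that $v=1-a(1+ba)^{-1}b$ is a two-sided inverse of $1+ab$, together with the intertwining relations obtained from $(1+ab)a=a(1+ba)$ and $b(1+ab)=(1+ba)b$, is precisely the routine computation the authors are implicitly relying on. Nothing is missing.
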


\begin{lemma}\label{LL1}
Let $a,\,b \in \R $. If $1+ab$ is left invertible, then so is $1+ba$.
\end{lemma}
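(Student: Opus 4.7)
The plan is to construct an explicit left inverse of $1+ba$ from a left inverse of $1+ab$, by mimicking the two-sided formula in Lemma~\ref{L1}. That formula, applied formally with roles of $a$ and $b$ swapped, suggests $(1+ba)^{-1} = 1 - b(1+ab)^{-1}a$. So if $c$ is any left inverse of $1+ab$, i.e.\ $c(1+ab)=1$, my guess for a left inverse of $1+ba$ is
\[
d := 1 - bca.
\]

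First I would rewrite the hypothesis $c(1+ab)=1$ in the expanded form $c + cab = 1$, which is the algebraic identity I intend to feed into the computation. Then I would compute
\[
d(1+ba) = (1 - bca)(1+ba) = 1 + ba - bca - bcaba = 1 + ba - b(c + cab)a.
\]
Substituting $c + cab = 1$ into the last term collapses $b(c+cab)a$ to $ba$, giving $d(1+ba)=1$, which is exactly what we need.

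I do not expect any genuine obstacle: the only nontrivial step is recognizing the correct candidate $d = 1 - bca$, and that is forced by the symmetry of Lemma~\ref{L1}. The computation itself is a one-line manipulation that uses no invertibility of $1+ba$ and no commutativity, so it works verbatim in an arbitrary unital ring. (As a sanity check, the completely analogous argument would show that right invertibility of $1+ab$ implies right invertibility of $1+ba$ via the same formula applied on the other side; this is consistent with Lemma~\ref{L1} and confirms the choice of $d$.)
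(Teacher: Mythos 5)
Your proof is correct and coincides with the paper's: both produce the same left inverse $1-bca$ from a left inverse $c$ of $1+ab$, differing only in the bookkeeping (the paper writes $1+ba=1+bc(1+ab)a=1+bca(1+ba)$ and rearranges, while you expand $d(1+ba)$ directly). No issues.
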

\begin{proof} Let $c\in \R $ such that $c(1+ab)=1$. Then
$$
1+ba=1+bc(1+ab)a=1+bca(1+ba).
$$
Therefore, $(1-bca)(1+ba)=1$.
\end{proof}

\begin{lemma}\label{4L1}
Let $a$ be a nonzero element in $\R $ such that $a^+$ exists. If $s=a^+a+aa^+-1$ is invertible in $\R $, then $a^\#$
exists and $a^\#=a^+s^{-1}+(1-a^+a)s^{-1}a^+s^{-1}$.
\end{lemma}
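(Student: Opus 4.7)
The plan is to verify directly that
$$b := a^+ s^{-1} + (1 - a^+a)s^{-1} a^+ s^{-1}$$
satisfies the three defining conditions of the group inverse, namely $aba = a$, $bab = b$, and $ab = ba$. Write $p = a^+ a$ and $q = aa^+$; these are idempotents with the standard relations $ap = a = qa$ and $pa^+ = a^+ = a^+ q$ inherited from $aa^+a = a$ and $a^+aa^+ = a^+$.

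Next, I would compile the one-sided multiplication rules involving $s = p + q - 1$. A direct expansion gives $sa = pa$, $as = aq$, $sa^+ = qa^+$, $a^+ s = a^+ p$, together with $sp = qp$, $ps = pq$, $sq = pq$, and $qs = qp$. The pivotal step---and, I expect, the main conceptual hurdle---is to deduce from the invertibility of $s$ the twisted commutation rules $s^{-1} p = q s^{-1}$ and $s^{-1} q = p s^{-1}$. For the first, observe that $s(s^{-1}p - qs^{-1}) = p - sq\cdot s^{-1} = p - pq\cdot s^{-1} = p - ps\cdot s^{-1} = 0$; cancelling $s$ on the left yields the identity, and the second is symmetric.

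With these relations in hand, the three verifications become routine. Since $a(1-p) = 0$, only the first summand of $b$ contributes to $ab$, giving $ab = qs^{-1}$. Using $a^+ p = a^+ s$ and $s^{-1} q = ps^{-1}$, I would compute $a^+ s^{-1} a = a^+ s^{-1}qa = a^+ ps^{-1}a = a^+ s\cdot s^{-1}a = p$; applying $s^{-1}$ on the left and invoking $s^{-1}p = qs^{-1}$ then yields $s^{-1}a^+ s^{-1}a = qs^{-1}$, so that $ba = p + (1-p)qs^{-1}$. The difference $ab - ba$ collapses to $pqs^{-1} - p = ps\cdot s^{-1} - p = 0$, whence $ab = ba$. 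The identity $aba = qs^{-1}a = qps^{-1}a = qs\cdot s^{-1}a = qa = a$ follows by the same kind of manipulation, and $a^+ s^{-1} q = a^+ ps^{-1} = a^+ s\cdot s^{-1} = a^+$ forces $b\cdot qs^{-1}$ to collapse back to $b$, giving $bab = b$.

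The main obstacle is not a single deep argument but the careful bookkeeping of these identities; the nontrivial ingredient is recognizing the twisted commutation $s^{-1}p = qs^{-1}$, which is precisely where the invertibility of $s$ enters essentially and which drives every subsequent simplification.
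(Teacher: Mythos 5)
Your proof is correct: all the auxiliary identities ($sa=pa$, $ps=pq=sq$, $qs=qp=sp$, and the twisted commutations $s^{-1}p=qs^{-1}$, $s^{-1}q=ps^{-1}$) check out, and the three verifications $aba=a$, $bab=b$, $ab=ba$ go through exactly as you outline. This is essentially the same computational approach as the paper's proof (which works with the same relations via $y=a^+s^{-1}$), except that yours is self-contained and does not need the cited external result for the existence of $a^\#$, since the direct verification establishes existence and the formula simultaneously.
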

\begin{proof} According to \cite{PC} or \cite[Theorem 4.5.9]{YX}, $a^\#$ exists. We now give the expression of $a^\#$ as
follows.

Put $p=a^+a$, $q=aa^+$. Then we have
\begin{equation}\label{4eqa}
ps=pq=sq,\ qs=qp=sp,\ sa=a^+a^2.
\end{equation}
Set $y=a^+s^{-1}$. Then by (\ref{4eqa}),
\begin{align*}
yp&=a^+s^{-1}a^+a=a^+aa^+s^{-1}=y=py,\\
pay&=a^+aaa^+s^{-1}=pqs^{-1}=p,\\
ypa&=a^+s^{-1}a^+aa=a^+a=p.
\end{align*}
Put $a_1=pap=pa$, $a_2=(1-p)ap=(1-p)a$. Then $a=a_1+a_2$ and it is easy to check that $a^\#=y+a_2y^2$. Using (\ref{4eqa}),
we can get that $a^\#=a^+s^{-1}+(1-a^+a)a(a^+s^{-1})^2=a^+s^{-1}+(1-a^+a)s^{-1}a^+s^{-1}.$
\end{proof}

Let $M_2(\R )$ denote the matrix ring of all $2\times 2$ matrices over $\R$ and let $1_2$ denote the unit of
$M_2(\R )$.
\begin{corollary}\label{c1}
Let $b,\,c\in \R $ have group inverse $b^\#$ and $c^\#$ respectively. Assume that
$k=b^\# b+c^\# c-1$ is invertible in $\R $. Then ${\begin{bmatrix}0&b\\ c&0\end{bmatrix}}^\#$ exists with
$
{\begin{bmatrix} 0&b\\ c&0\end{bmatrix}}^\#=\begin{bmatrix}0&k^{-1}c^\#k^{-1}\\ k^{-1}b^\#k^{-1}&0\end{bmatrix}.
$

In particular, when $b^\# bc^\# c=b^\# b$ and $c^\# cb^\# b=c^\# c$, ${\begin{bmatrix}0&b\\ c&0\end{bmatrix}}^\#=
\begin{bmatrix}0&b^\# bc^\#\\ c^\#cb^\#&0\end{bmatrix}$.
\end{corollary}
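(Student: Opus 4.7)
The plan is to apply Lemma~\ref{4L1} to the matrix $A = \begin{bmatrix} 0 & b \\ c & 0 \end{bmatrix}$ viewed as an element of the unital ring $M_2(\R)$. The natural candidate for a generalized inverse of $A$ is
$$
A^+ := \begin{bmatrix} 0 & c^\# \\ b^\# & 0 \end{bmatrix},
$$
and the relations $AA^+A = A$, $A^+AA^+ = A^+$ follow from one block multiplication each, reducing to the group-inverse axioms for $b$ and $c$. Since $bb^\# = b^\# b$ and $cc^\# = c^\# c$, the element $s := A^+A + AA^+ - 1_2$ collapses to $\mathrm{diag}(k,k)$, which is invertible in $M_2(\R)$ iff $k$ is invertible in $\R$, with $s^{-1} = \mathrm{diag}(k^{-1},k^{-1})$. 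Lemma~\ref{4L1} then yields the existence of $A^\#$ together with the formula
$$
A^\# = A^+ s^{-1} + (1_2 - A^+A)\, s^{-1} A^+ s^{-1}.
$$

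Carrying out the block multiplication produces
$$
A^\# = \begin{bmatrix} 0 & c^\# k^{-1} + (1 - c^\# c)\, k^{-1} c^\# k^{-1} \\ b^\# k^{-1} + (1 - b^\# b)\, k^{-1} b^\# k^{-1} & 0 \end{bmatrix},
$$
so the main (and only real) obstacle is showing that the off-diagonal entries collapse to $k^{-1} c^\# k^{-1}$ and $k^{-1} b^\# k^{-1}$ respectively. Writing $p = b^\# b$, $q = c^\# c$, a direct expansion of $k = p+q-1$ using $p^2=p$ and $q^2=q$ yields $kp = qk$ and $kq = pk$, hence the commutations $pk^{-1} = k^{-1} q$ and $qk^{-1} = k^{-1} p$. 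Combined with $kc^\# = pc^\#$ and $kb^\# = qb^\#$ (immediate from $c^\# c c^\# = c^\#$ and $b^\# b b^\# = b^\#$), the required simplification becomes a one-line computation such as $qk^{-1}c^\# = k^{-1}pc^\# = k^{-1}(kc^\#) = c^\#$, which gives $c^\# k^{-1} = qk^{-1}c^\# k^{-1}$ and hence the desired collapse; the $(2,1)$-entry is handled symmetrically.

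For the \emph{in particular} clause, the additional hypotheses read $pq = p$ and $qp = q$, so expanding $(p+q-1)^2$ gives $k^2 = 1_\R$ and $k^{-1} = k$. Plugging this in and using $c^\# q = c^\#$ together with $c^\# p = c^\#$ (the latter obtained by left-multiplying $c^\# c b^\# b = c^\# c$ by $c^\#$) reduces $kc^\# k$ to $pc^\# = b^\# b c^\#$; symmetrically $kb^\# k = c^\# c b^\#$, giving the stated matrix.
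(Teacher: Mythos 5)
Your proof is correct and follows essentially the same route as the paper: both apply Lemma~\ref{4L1} to $A=\begin{bmatrix}0&b\\ c&0\end{bmatrix}$ with $A^+=\begin{bmatrix}0&c^\#\\ b^\#&0\end{bmatrix}$, obtain $s=\mathrm{diag}(k,k)$, and collapse the resulting entries via the commutation $b^\#b\,k^{-1}=k^{-1}c^\#c$ (which the paper states as $bb^\#k^{-1}=k^{-1}cc^\#$ and you derive explicitly). You simply supply more of the routine verifications than the paper does.
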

\begin{proof} Set $a=\begin{bmatrix}0&b\\ c&0\end{bmatrix}$. Then $a^+=\begin{bmatrix}0&c^\#\\ b^\#&0\end{bmatrix}$
and
$$
a^+a+aa^+-1_2=\begin{bmatrix}b^\# b+c^\# c-1&0\\ 0&b^\# b+c^\# c-1\end{bmatrix}=\begin{bmatrix}k\\ \ &k\end{bmatrix}
$$
is invertible in $M_2(\R )$. Noting that $bb^\#k^{-1}=k^{-1}cc^\#$. Thus, by Lemma \ref{4L1},
\begin{align*}
a^\#&=a^+\begin{bmatrix}k^{-1}\\ \ &k^{-1}\end{bmatrix}+(1_2-a^+a)\begin{bmatrix}k^{-1}\\ \ &k^{-1}\end{bmatrix}
a^+\begin{bmatrix}k^{-1}\\ \ &k^{-1}\end{bmatrix}\\
&=\begin{bmatrix}0&c^\# k^{-1}+(1-c^\# c)k^{-1}c^\# k^{-1}\\ b^\# k^{-1}+(1-b^\# b)k^{-1}b^\# k^{-1}&0\end{bmatrix}\\
&=\begin{bmatrix}0&k^{-1}c^\#k^{-1}\\ k^{-1}b^\#k^{-1}&0\end{bmatrix}.
\end{align*}

When $b^\# bc^\# c=b^\# b$ and $c^\# cb^\# b=c^\# c$, $k^{-1}=k$. In this case,
$
\begin{bmatrix}0&b\\ c&0\end{bmatrix}^\#=\begin{bmatrix}0&b^\#bc^\#\\ c^\#cb^\#&0\end{bmatrix}.
$
\end{proof}

\begin{lemma}\label{L2}
Let $a,\,b\in \R$ and $p$ be a non--trivial idempotent element in $\R$, i.e., $p\not=0,1$. Put $x=pap+pb(1-p)$.
\begin{enumerate}
\item[$(1)$] If $pap$ is group invertible and $(pap)(pap)^\#b(1-p)=pb(1-p)$, then $x$ is group invertible too and $x^\#=
(pap)^\#+[(pap)^\#]^2pb(1-p)$.
\item[$(2)$] If $x$ is group invertible, then so is the $pap$ and $(pap)(pap)^\#b(1-p)=pb(1-p)$.
\end{enumerate}
\end{lemma}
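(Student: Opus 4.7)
My plan is to exploit the Peirce decomposition of $\R$ with respect to the idempotent $p$, identifying every $r\in\R$ with the $2\times 2$ block $\begin{bmatrix}prp & pr(1-p)\\ (1-p)rp & (1-p)r(1-p)\end{bmatrix}$ so that ring multiplication corresponds to block matrix multiplication. Writing $u=pap$ and $v=pb(1-p)$, the element $x=pap+pb(1-p)$ takes the simple block form $\begin{bmatrix}u & v\\ 0 & 0\end{bmatrix}$, and in particular $vu=0$, $v^{2}=0$, $pu=up=u$ and $pv=v(1-p)=v$.

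\textbf{Part $(1)$.} For the forward implication I would take as candidate $y=(pap)^{\#}+[(pap)^{\#}]^{2}pb(1-p)=\begin{bmatrix}u^{\#} & (u^{\#})^{2}v\\ 0 & 0\end{bmatrix}$ and verify the three group-inverse axioms $xy=yx$, $xyx=x$, $yxy=y$ by direct block multiplication. Thanks to $vu=0$ and $v^{2}=0$ almost every cross term drops; the commutativity $u^{\#}u=uu^{\#}$ handles the diagonal entries; and the hypothesis $(pap)(pap)^{\#}b(1-p)=pb(1-p)$, equivalently $uu^{\#}v=v$, enters exactly once, to recover $v$ as the $(1,2)$ entry of $xyx$.

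\textbf{Part $(2)$ and main obstacle.} For the converse I would write $x^{\#}=\begin{bmatrix}\alpha_{1} & \alpha_{2}\\ \alpha_{3} & \alpha_{4}\end{bmatrix}$ and read the consequences of the three axioms block by block. Matching the bottom rows of $xy=yx$ (the bottom row of $xy$ is zero) yields $\alpha_{3}u=0$ and $\alpha_{3}v=0$; consequently the bottom row of $yxy$ also vanishes, and $yxy=y$ then forces $\alpha_{3}=0$ and $\alpha_{4}=0$. With these off-diagonal terms gone, the top-left pieces of $xy=yx$, $xyx=x$, $yxy=y$ collapse to $u\alpha_{1}=\alpha_{1}u$, $\alpha_{1}u^{2}=u$ and $\alpha_{1}u\alpha_{1}=\alpha_{1}$, which are precisely the axioms making $\alpha_{1}=(pap)^{\#}$; the top-right entry of $xyx=x$ then reads $\alpha_{1}uv=v$, i.e., $(pap)(pap)^{\#}b(1-p)=pb(1-p)$. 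The main obstacle is this converse: the commutativity $xy=yx$ by itself only kills the products $\alpha_{3}u$ and $\alpha_{3}v$, not $\alpha_{3}$ or $\alpha_{4}$ themselves, so one must genuinely invoke the full $yxy=y$ identity to eliminate the bottom row of $x^{\#}$, after which the top-left block decouples into the group-inverse relations for $u=pap$.
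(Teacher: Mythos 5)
Your proof is correct and takes essentially the same route as the paper's: the $2\times 2$ block form you use is precisely the Peirce decomposition $x^{\#}=px^{\#}p+px^{\#}(1-p)+(1-p)x^{\#}p+(1-p)x^{\#}(1-p)$ that the paper writes as $y_1+y_2+y_3+y_4$. In both arguments part $(1)$ is a direct verification of the three group-inverse identities for the same candidate, and part $(2)$ extracts $y_3=y_4=0$ and the group-inverse relations for $pap$ from the three axioms blockwise, exactly as you do.
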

\begin{proof} (1) It is easy to check that $p(pap)^\#=(pap)^\# p=(pap)^\#$. Put $y=(pap)^\#+[(pap)^\#]^2pb(1-p)$. Then
$xyx=x$, $yxy=y$ and $xy=yx$, i.e., $y=x^\#$.

(2) Set $y_1=px^\# p$, $y_2=px^\#(1-p)$, $y_3=(1-p)x^\# p$ and $y_4=(1-p)x^\#(1-p)$. Then $x^\#=y_1+y_2+y_3+y_4$.
From $xx^\# x=x$, $x^\# xx^\#=x^\#$ and $xx^\#=x^\# x$, we can obtain that $y_3=y_4=0$ and
$$
(pap)y_1(pap)=pap,\quad y_1(pap)y_1=y_1,\quad y_1(pap)=(pap)y_1,\quad y_1(pap)pb(1-p)=pb(1-p),
$$
that is, $(pap)^\#=y_1$ and $(pap)(pap)^\#b(1-p)=pb(1-p)$.
\end{proof}

At the end of this section, we will introduce the notation of stable perturbation of an element in a ring.

Let $\mathcal A$ be a unital Banach algebra and $a\in\mathcal A$ such that $a^+$ exists. Let $\bar a=a+\delta a\in
\mathcal A$. Recall from \cite{YFX} that $\bar a$ is a stable perturbation of $a$ if $\bar a\mathcal A\cap(1-aa^+)\mathcal A=\{0\}$.
This notation can be easily extended to the case of ring as follows.

\begin{definition}
Let $a\in\R$ such that $a^+$ exists and let $\bar a=a+\delta a\in R$. We say $\bar a$ is a stable perturbation of $a$ if
$\bar a\R\cap(1-aa^+)\R=\{0\}$.
\end{definition}

Using the same methods as appeared in the proofs of \cite[Proposition 2.2]{YFX} and \cite[Theorem 2.4.7]{YX}, we can
obtain:
\begin{proposition}\label{P1}
Let $a\in \R $ and $\bar{a}=a+\delta a \in \R $ such that $a^+$ exists and $1+a^+\delta a$
is invertible in $\R $. Then the following statements are equivalent:
\begin{enumerate}
  \item [$(1)$] $\bar{a}^+$ exists and $\bar{a}^+=(1+a^+\delta a)^{-1}a^+$.
  \item [$(2)$] $\bar{a}\R  \cap (1-aa^+)\R =\{0\}$ $($ that is, $\bar a$ is a stable perturbation if $a)$.
  \item [$(3)$] $\bar{a}(1+a^+\delta a)^{-1}(1-a^+a)=0$.
  \item [$(4)$] $(1-aa^+)(1+\delta aa^+)^{-1}\bar{a}=0$.
  \item [$(5)$] $(1-aa^+)\delta a (1-a^+a)=(1-aa^+)\delta a (1+a^+\delta a)^{-1}a^+\delta a(1-a^+a)$.
  \item[$(6)$]  $\R \bar a\cap \R (1-a^+a)=\{0\}$.
\end{enumerate}
\end{proposition}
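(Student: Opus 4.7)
The plan is to introduce a single candidate generalized inverse and collapse all six conditions onto one algebraic identity. Write $u=1+a^+\delta a$ and $v=1+\delta a\,a^+$. Lemma~\ref{L1} guarantees that $v$ is invertible and supplies the intertwinings $u^{-1}a^+=a^+v^{-1}$ and $\delta a\cdot u^{-1}=v^{-1}\delta a$. From $aa^+a=a$ one reads off the two decompositions
\[
\bar a \;=\; au+(1-aa^+)\delta a \;=\; va+\delta a(1-a^+a),
\]
and from $a^+aa^+=a^+$ the auxiliary identities $(1-a^+a)u=1-a^+a$ and $v(1-aa^+)=1-aa^+$, whence $(1-a^+a)u^{-1}=1-a^+a$ and $v^{-1}(1-aa^+)=1-aa^+$.

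Setting $b:=u^{-1}a^+=a^+v^{-1}$, a direct calculation from the decompositions yields $b\bar a=1-u^{-1}(1-a^+a)$ and $\bar a b=1-(1-aa^+)v^{-1}$, while $(1-a^+a)a^+=0$ forces $b\bar a b=b$ unconditionally. The key identity is
\[
\bar a-\bar a b\bar a \;=\; \bar a\,u^{-1}(1-a^+a) \;=\; (1-aa^+)v^{-1}\bar a \;=\; (1-aa^+)\delta a\cdot u^{-1}(1-a^+a),
\]
both equalities following from the decompositions and the projection identities. Hence (1), (3), and (4) are simultaneously equivalent to this common quantity vanishing. Substituting $u^{-1}=1-u^{-1}a^+\delta a$ in the last expression and rearranging yields exactly equation~(5), so (3)$\Leftrightarrow$(5).

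The remaining implications link (2) and (6) to this cluster. For (1)$\Rightarrow$(2): if $x=\bar a y=(1-aa^+)z$, then $bx=a^+v^{-1}(1-aa^+)z=a^+(1-aa^+)z=0$, so $x=\bar a b\bar a\,y=\bar a(bx)=0$. Conversely, (2)$\Rightarrow$(3) is immediate, since the element $\bar a\,u^{-1}(1-a^+a)=(1-aa^+)\delta a\,u^{-1}(1-a^+a)$ lies in both $\bar a\R$ and $(1-aa^+)\R$ and is therefore annihilated by (2). The equivalence (1)$\Leftrightarrow$(6) is obtained by the mirror argument: if $x=y\bar a=z(1-a^+a)$, then $xb=z(1-a^+a)u^{-1}a^+=z(1-a^+a)a^+=0$, so $x=xb\,\bar a=0$; conversely $(1-a^+a)u^{-1}\bar a$ witnesses the reverse implication.

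The main delicacy is purely bookkeeping: $\R$ is non-commutative and carries no adjoint, so each $u^{\pm1}$ and $v^{\pm1}$ must be moved through $a$, $a^+$ or $\delta a$ using the Lemma~\ref{L1} intertwinings rather than by transposition, and the cross terms only collapse thanks to the projection identities $a(1-a^+a)=0=(1-aa^+)a$ and $(1-a^+a)a^+=0=a^+(1-aa^+)$. Once those are installed as the backbone of the computation, the argument is a faithful transcription of the Banach-algebra proof of \cite[Proposition~2.2]{YFX} to the purely algebraic setting.
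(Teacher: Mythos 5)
Your proof is correct and is exactly the argument the paper defers to (it cites \cite{YFX} and \cite{YX} rather than proving Proposition \ref{P1}): all the identities check out, $b\bar ab=b$ holds unconditionally, and the chain $\bar a-\bar ab\bar a=\bar au^{-1}(1-a^+a)=(1-aa^+)v^{-1}\bar a=(1-aa^+)\delta a\,u^{-1}(1-a^+a)$ correctly collapses $(1)$, $(3)$, $(4)$, $(5)$ onto one vanishing condition, with $(2)$ and $(6)$ attached as you describe. One typo: the witness for $(6)\Rightarrow(4)$ should be $(1-aa^+)v^{-1}\bar a$ (which lies in $\R\bar a$ and, by your key identity, in $\R(1-a^+a)$), not $(1-a^+a)u^{-1}\bar a$.
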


\section{Main results}
In this section, we investigate the stable perturbation for group inverse and Drazin inverse of an element $a$ in $\R$.

Let $a\in \R $ and $\bar{a}=a+\delta a \in \R $ such that $a^\#$ exists and
$1+a^\#\delta a$ is invertible in $\R $. Put $a^\pi=(1-a^\# a)$,
$\Phi(a)=1+\delta aa^\pi\delta a [(1+a^\#\delta a)^{-1}a^\#]^2$ and
$
B=\Phi(a)(1+\delta a a^\#),\ C(a)=a^\pi\delta a (1+a^\#\delta a)^{-1}a^\#.
$
These symbols will be used frequently in this section.
\begin{lemma}\label{P4}
Let $a\in \R $ and $\bar{a}=a+\delta a \in \R $ such that $a^\#$ exists and
$1+a^\#\delta a$ is invertible in $\R $. Suppose that $\Phi(a)$ is invertible, then $(Ba)^\#=Baa^\#B^{-1}a^\#B^{-1}.$
\end{lemma}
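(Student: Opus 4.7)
Set $p=aa^\#=a^\#a$ (so $a^\pi=1-p$), $u=1+a^\#\delta a$, $v=1+\delta a a^\#$, and $t=u^{-1}a^\#=a^\#v^{-1}$, the second equality by Lemma \ref{L1}. Writing $y:=BpB^{-1}a^\#B^{-1}$, I will verify the three defining axioms $(Ba)y(Ba)=Ba$, $y(Ba)y=y$, and $(Ba)y=y(Ba)$ for the group inverse.

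The whole argument is driven by a single identity:
\[
Ba^\pi=a^\pi,\qquad\text{and consequently}\qquad B^{-1}a^\pi=a^\pi.
\]
To prove it, note $a^\#a^\pi=0$ yields $va^\pi=a^\pi+\delta a(a^\#a^\pi)=a^\pi$, hence $v^{-1}a^\pi=a^\pi$, and therefore
\[
ta^\pi=a^\#v^{-1}a^\pi=a^\#a^\pi=0,
\]
which forces $t^2 a^\pi=0$. Substituting into the definition, $\Phi(a)a^\pi=a^\pi+\delta a\,a^\pi\,\delta a\,(t^2a^\pi)=a^\pi$, and combining with $va^\pi=a^\pi$ gives $Ba^\pi=\Phi(a)a^\pi=a^\pi$.

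Two consequences follow at once. Multiplying $Ba^\pi=a^\pi$ on the left by $a$ (and using $aa^\pi=0$) yields $aBp=aB$; multiplying $B^{-1}a^\pi=a^\pi$ on the left by $p$ (and using $pa^\pi=0$) yields $pB^{-1}=pB^{-1}p$, equivalently $BpB^{-1}=BpB^{-1}p$. Now each axiom is a one-line computation. The telescoping $a^\#a=p$ gives $y(Ba)=BpB^{-1}p$. Regrouping $(Ba)y=B[aBpB^{-1}a^\#]B^{-1}$ and applying $aBp=aB$ collapses the bracket to $aB\cdot B^{-1}a^\#=aa^\#=p$, whence $(Ba)y=BpB^{-1}=BpB^{-1}p=y(Ba)$, establishing the commutation. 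Then $(Ba)y(Ba)=BpB^{-1}\cdot Ba=B(pB^{-1}B)a=Bpa=Ba$, and $y(Ba)y=BpB^{-1}p\cdot y=BpB^{-1}\cdot BpB^{-1}a^\#B^{-1}=Bp\cdot pB^{-1}a^\#B^{-1}=BpB^{-1}a^\#B^{-1}=y$.

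The main obstacle, such as it is, lies in spotting the identity $Ba^\pi=a^\pi$; once in hand, everything is mechanical. Lemma \ref{L1} plays a pivotal role by providing the alternative form $t=a^\#v^{-1}$ that exposes the annihilation $ta^\pi=0$, and the invertibility hypothesis on $\Phi(a)$ enters only to ensure that $B=\Phi(a)v$ is itself invertible so that $B^{-1}$ is available throughout.
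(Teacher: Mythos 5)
Your proof is correct and follows essentially the same route as the paper: both arguments rest on the observation that $B$ and $B^{-1}$ fix $a^\pi\R$ (your identity $Ba^\pi=a^\pi$ is the paper's $\Phi(a)(1-P)=1-P$, $PBP=PB$, $a^\#B^{-1}=a^\#B^{-1}P$ with $P=aa^\#$), after which the three group-inverse axioms for $Baa^\#B^{-1}a^\#B^{-1}$ are verified by direct computation. Your packaging of all the needed facts into the single relation $Ba^\pi=a^\pi$ is a slightly cleaner presentation, but it is not a different method.
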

\begin{proof}
Put $P=aa^\#$.
 Noting that $\Phi(a)(1-P)=1-P$, we have $P\Phi(a) P=P\Phi(a)$, $\Phi^{-1}(a)(1-P)=(1-P)$ and
$PBP=PB$, $B^{-1}(1-P)=(1+\delta a a^\#)^{-1}(1-P)$, $a^\#B^{-1}(1-P)=0$, i.e., $a^\#B^{-1}=a^\#B^{-1}P$.
Thus, $BPB^{-1}Ba=Ba$ and
\begin{align*}
(Ba)(Baa^\#B^{-1}a^\#B^{-1})&=BPB^{-1}=(Baa^\#B^{-1}a^\#B^{-1})(Ba),\\
(Baa^\#B^{-1}a^\#B^{-1})(BPB^{-1})&=Baa^\#B^{-1}a^\#B^{-1}.
\end{align*}
These indicate $(Ba)^\#=Baa^\#B^{-1}a^\#B^{-1}.$
\end{proof}

\begin{theorem} \label{T1}
Let $a\in \R $ such that $a^\#$ exists. Let $\bar{a}=a+\delta a \in \R $ with $1+a^\#\delta a$
invertible in $\R $. Suppose that $\Phi(a)$ is invertible and $\bar{a}\R  \cap (1-aa^\#)\R =\{0\}$. Put
$D(a)=(1+a^\#\delta a )^{-1}a^\#\Phi^{-1}(a)$. Then $\bar{a}^\#$ exists with
$$\bar{a}^\#=(1+C(a))(D(a)+D^2(a)\delta a a^\pi)(1-C(a)).$$
\end{theorem}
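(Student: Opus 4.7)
The plan is to construct an explicit similarity that converts $\bar a$ to a block--upper--triangular element relative to the idempotent $p=aa^\#=a^\#a$, compute the group inverse of the simpler element via Lemma~\ref{L2}, and transport the answer back. Two preliminary observations drive the whole argument. First, by the stable perturbation hypothesis and Proposition~\ref{P1}, $\bar a^+$ already exists and equals $(1+a^\#\delta a)^{-1}a^\#=a^\#(1+\delta a\,a^\#)^{-1}$, the two expressions agreeing via Lemma~\ref{L1}. Second, $C(a)$ has the form $a^\pi(\cdots)a^\#$, so $C(a)^2=0$ (because $a^\#a^\pi=0$); hence $1+C(a)$ is a unit of $\R$ with $(1+C(a))^{-1}=1-C(a)$.

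\medskip

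Set $\hat a:=(1-C(a))\,\bar a\,(1+C(a))$. Since $\hat a$ is similar to $\bar a$, one has $\bar a^\#=(1+C(a))\,\hat a^\#\,(1-C(a))$, so it suffices to compute $\hat a^\#$. To expose the block structure of $\hat a$, combine $a\,a^\pi=0$ (which gives $\bar a\,a^\pi=\delta a\,a^\pi$) with the key identity
\[
a^\pi\,\delta a\,(1+a^\#\delta a)^{-1}\,a^\pi \;=\; 0,
\]
which drops out of Proposition~\ref{P1}(3) after left--multiplying by $a^\pi$. A short calculation (using also $a^\#a=1-a^\pi$) yields $C(a)\bar a=a^\pi\,\delta a$, whence
\[
\hat a \;=\; \alpha \;+\; aa^\#\,\delta a\,a^\pi, \qquad (1-p)\hat a = 0,
\]
where $\alpha\in p\R p$ bundles the three terms lying in $p\R p$. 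Thus $\hat a$ has precisely the form $pap+pb(1-p)$ demanded by Lemma~\ref{L2}.

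\medskip

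The next task is to identify $\alpha^\#=D(a)$. Rewriting carefully (using Lemma~\ref{L1} to commute $a^\#$ past $(1+a^\#\delta a)^{-1}$) shows that $\alpha=p(Ba)p$ with $B=\Phi(a)(1+\delta a\,a^\#)$ as in Lemma~\ref{P4}, and that $D(a)=a^\#B^{-1}$. Using $\Phi(a)a^\pi=a^\pi$ (so $\Phi^{-1}(a)a^\pi=a^\pi$) together with $p\,a^\#=a^\#\,p=a^\#$, one finds $D(a)\in p\R p$. The verification $\alpha\,D(a)=p$ then reduces, after the obvious cancellations, to the block identity $p\cdot BpB^{-1}=p$, which can be read off the lower--triangular block decomposition of $B$ relative to $p$. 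The symmetric check $D(a)\,\alpha=p$ proceeds analogously, so $\alpha$ is invertible in $p\R p$ with $\alpha^{-1}=\alpha^\#=D(a)$.

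\medskip

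With $\alpha^\#=D(a)$ in hand, the compatibility condition of Lemma~\ref{L2}(1) is automatic (since $\alpha\alpha^\#=p$ is the unit of $p\R p$), and that lemma gives
\[
\hat a^\# \;=\; D(a)+D(a)^2\cdot aa^\#\,\delta a\,a^\pi \;=\; D(a)+D(a)^2\,\delta a\,a^\pi,
\]
the last equality using $D(a)\,aa^\#=D(a)$. Conjugating back by $1+C(a)$ produces the formula of the theorem. The principal technical obstacle throughout is the identification $\alpha\,D(a)=p$: this single line absorbs the full content of the stable perturbation hypothesis, the structural relations $\Phi(a)a^\pi=a^\pi$ and $C(a)^2=0$, and the commutation rules of Lemma~\ref{L1}; everything else is bookkeeping about which factor lies in $p\R p$ versus $(1-p)\R$.
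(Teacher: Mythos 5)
Your proposal is correct and follows essentially the same route as the paper: conjugate $\bar a$ by $1\pm C(a)$, recognize the result as $PBaP+P\delta a(1-P)$ with $P=aa^\#$, invert the corner block to get $D(a)=a^\#B^{-1}$, apply Lemma~\ref{L2}(1), and conjugate back. Your one genuine streamlining is the observation that $\alpha=PBaP$ is actually \emph{invertible} in the corner ring $P\R P$ with inverse $D(a)$ (so the compatibility hypothesis of Lemma~\ref{L2}(1) is automatic), which lets you bypass the paper's detour through the group-inverse formula $(Ba)^\#=Baa^\#B^{-1}a^\#B^{-1}$ of Lemma~\ref{P4}; the only step you leave compressed, the identity $P\bar a(1+C(a))P=PBaP$, is exactly the paper's long displayed computation and does check out.
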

{\it Proof.\ } Put $P=aa^\#$. By Proposition \ref{P1} (3), we have $a^\pi(1+\delta aa^\# )^{-1}\bar{a}=0$ and
$$
P\bar{a}(1+a^\# \delta a)^{-1}=a(aa^\#+a^\#\delta a)(1+a^\# \delta a)^{-1}
=a(1+a^\# \delta a-a^\pi) (1+a^\#\delta a )^{-1}=a.
$$
Thus, we have
\begin{align*}
&(1-C(a))\bar{a}(1+C(a))\\
&=[P+a^\pi(1+\delta a a^\#)^{-1}]\bar{a}[1+a^\pi \delta a(1+a^\# \delta a)^{-1}a^\#]\\
&=P\bar{a}[1+a^\pi \delta a(1+a^\# \delta a)^{-1}a^\#]\\
&=P\bar{a}+P\bar{a}a^\pi \delta a(1+a^\# \delta a)^{-1}a^\#\\
&=P\bar{a}+P\delta a a^\pi \delta a(1+a^\# \delta a)^{-1}a^\#\\
&=P\delta a+P[a+\delta a a^\pi \delta a(1+a^\# \delta a)^{-1}a^\#]\\
&=P\delta a(1-P)+P\delta a P+P[a+\delta a a^\pi \delta a(1+a^\# \delta a)^{-1}a^\#]\\
&=P\delta a(1-P)+P[\delta a +a+\delta a a^\pi \delta a(1+a^\# \delta a)^{-1}a^\#]P\\
&=P\delta a(1-P)+P[\bar{a}+\delta a a^\pi \delta a(1+a^\# \delta a)^{-1}a^\#]P\\
&=P\delta a(1-P)+P[\bar{a}(1+a^\# \delta a)^{-1}+\delta a a^\pi \delta a(1+a^\# \delta a)^{-1}a^\#(1+a^\# \delta a)^{-1}](1+a^\# \delta a)P\\
&=P\delta a(1-P)+P[a+\delta a a^\pi \delta a(1+a^\# \delta a)^{-1}a^\#(1+a^\# \delta a)^{-1}](1+a^\# \delta a)P\\
&=P\delta a(1-P)+P[a+\delta a a^\pi \delta a(1+a^\# \delta a)^{-1}a^\#(1+a^\# \delta a)^{-1}]a^\#(1+ \delta aa^\#)a\\
&=P\delta a(1-P)+P[aa^\#+\delta a a^\pi \delta a(1+a^\# \delta a)^{-1}a^\#(1+a^\# \delta a)^{-1}a^\#](1+ \delta aa^\#)a\\
&=P\delta a(1-P)+P[1+\delta a a^\pi \delta a((1+a^\# \delta a)^{-1}a^\#)^2](1+ \delta aa^\#)a\\
&=P\delta a(1-P)+P\Phi(a)(1+ \delta aa^\#)aP.
\end{align*}
By Lemma \ref{P4}, we have
$$
P(Ba)^\#P=PBaa^\#B^{-1}a^\#B^{-1}P=PBPB^{-1}a^\#B^{-1}P=a^\#B^{-1}=P(Ba)^\#
$$
and $PBa(Ba)^\# \delta a = P\delta a$. So $P(Ba)^\# P(Ba)P=P(Ba)^\#(Ba)$ and
\begin{align*}
P(Ba)PP(Ba)^\# P&=P(Ba)(Ba)^\#P=P(Ba)^\#(Ba)P=P(Ba)^\# P(Ba)P\\
P(Ba)^\# P(Ba)P(Ba)^\#P&=P(Ba)^\#(Ba)(Ba)^\#P=P(Ba)^\# P,\\
P(Ba)P(Ba)^\#P(Ba)P&=P(Ba)P(Ba)^\#(Ba)P=P(Ba)P,
\end{align*}
i.e., $(P(Ba)P)^\#=P(Ba)^\#=a^\#B^{-1}$. So $P(Ba)P(P(Ba)P)^\#=P$ and hence, we have by Lemma \ref{L2} (1),
$$[(1-C(a))\bar{a}(1+C(a))]^\#=a^\#B^{-1}+[a^\#B^{-1}]^2\delta a(1-P).$$
Therefore,
\begin{align*}
\bar{a}^\#&=(1+C(a))[(1-C(a))\bar{a}(1+C(a))]^\#(1-C(a))\\
&=(1+C(a))(D(a)+D^2(a)\delta a a^\pi)(1-C(a))\\
&=(1+a^\pi\delta a (1+a^\#\delta a)^{-1}a^\#)(1+a^\#\delta a)^{-1}a^\#\big[1+\delta aa^\pi\delta a[(1+a^\#\delta a)^{-1}a^\#]^2
\big]^{-1}\\
&\ \times\big[1+(1+a^\#\delta a)^{-1}a^\#\big[1+\delta aa^\pi\delta a[(1+a^\#\delta a)^{-1}a^\#]^2\big]^{-1}\delta aa^\pi\big]
(1-a^\pi\delta a (1+a^\#\delta a)^{-1}a^\#).\qed
\end{align*}

Now we consider the case when $a\in \R $ and $\bar{a}=a+\delta a \in \R $ such that $a^\#$, $\bar{a}^\#$ exist.
Firstly, we have
\begin{proposition}\label{P5}
Let $a\in \R $, $\bar{a}=a+\delta a \in \R $ such that $a^\#$, $\bar{a}^\#$ exist.
Then the following statements are equivalent:
\begin{enumerate}
  \item[$(1)$] $\R =\bar{a}\R \dotplus (1-aa^\#)\R =a\R \dotplus(1-\bar{a}\bar{a}^\#)\R =\R \bar{a}\dotplus \R (1-aa^\#)=\R a\dotplus \R (1-\bar{a}\bar{a}^\#)$.
  \item[$(2)$]  $K=K(a,\bar{a})=\bar{a}\bar{a}^\#+aa^\#-1$ is invertible.
  \item[$(3)$]  $\bar{a}\R \cap (1-aa^\#)\R =\{0\}$, $\R \bar{a}\cap \R (1-aa^\#)=\{0\}$ and $1+\delta a a^\#$ is invertible.
\end{enumerate}
\end{proposition}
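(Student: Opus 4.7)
The plan is to prove $(1) \Leftrightarrow (2)$ and $(2) \Leftrightarrow (3)$ separately, setting $P := aa^\#$ and $Q := \bar a \bar a^\#$ (both idempotents with $Pa = a = aP$, $Q\bar a = \bar a = \bar a Q$). For $(2) \Rightarrow (1)$, I would rewrite $K = Q - (1-P) = P - (1-Q)$: the identity $x = K(K^{-1}x)$ immediately yields $\R = \bar a\R + (1-aa^\#)\R$, and any $w$ in the intersection satisfies $Kw = Pw + Qw - w = 0$, giving uniqueness. The other three decompositions follow by symmetric tricks with the alternate form of $K$ and with right multiplication. Conversely, for $(1) \Rightarrow (2)$, the standard splitting argument extracts from (i) a unique idempotent $e$ with $eP = e$, $Pe = P$, $eQ = Q$, $Qe = e$, and from (ii) a dual $f$ with $fP = P$, $Pf = f$, $fQ = f$, $Qf = Q$. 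Direct computation gives $Ke = P$, $eK = Q$, $Kf = Q$, $fK = P$, so $L := e + f - 1$ satisfies $KL = Ke + Kf - K = P + Q - K = 1$ and $LK = 1$.

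For $(2) \Rightarrow (3)$, conditions (a) and (b) are simply the uniqueness halves of decompositions (i) and (iii). For (c), I would show that left multiplication by $1+\delta a a^\#$ is a bijection on $\R$; the standard unital-ring argument then lifts bijectivity to invertibility of the element. Since $(1+\delta a a^\#)(1-P) = 1-P$ and $(1+\delta a a^\#)P = \bar a a^\#$, the map acts as the identity on $(1-P)\R$ from the right and sends $P\R$ to $\bar a a^\#\R$. Surjectivity onto $Q\R$ reduces to $\bar a a^\#\R = Q\R$, which holds because writing $\bar a^\# = Pu + (1-Q)v$ under decomposition (ii) gives $Q = \bar a Pu = \bar a a^\#(au)$. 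For injectivity, if $\bar a a^\# y = 0$ with $y \in P\R$, then $z := a^\# y$ lies in $P\R$ with $y = az$; from $\bar a z = 0$ one gets $Qz = \bar a^\#\bar a z = 0$, so $Kz = Pz + Qz - z = 0$, and invertibility of $K$ yields $z = 0$ and thus $y = 0$.

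For $(3) \Rightarrow (2)$, Proposition~\ref{P1} applied with $a^+ = a^\#$ supplies the pseudo-inverse $\bar a^+ := a^\#(1+\delta a a^\#)^{-1} = (1+a^\#\delta a)^{-1}a^\#$ of $\bar a$, together with the formulas $\bar a\bar a^+ = 1 - (1-P)(1+\delta a a^\#)^{-1}$ and $\bar a^+\bar a = 1 - (1+a^\#\delta a)^{-1}(1-P)$. The key step is to use the left stable perturbation condition (b), together with the symmetric left-sided version of Proposition~\ref{P1}, to establish the commutation $\bar a^+\bar a = \bar a\bar a^+$; by uniqueness of the group inverse this forces $\bar a^+ = \bar a^\#$, so $Q = \bar a\bar a^+$ is given in closed form. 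The invertibility of $K = P + Q - 1$ then follows by direct construction of $K^{-1}$ from $(1+\delta a a^\#)^{-1}$ via Lemma~\ref{L1}. The main obstacle is precisely this commutation step: it amounts to the identity $(1+a^\#\delta a)^{-1}(1-P) = (1-P)(1+\delta a a^\#)^{-1}$, which must be extracted from the combined force of (a), (b), and Lemma~\ref{L1}.
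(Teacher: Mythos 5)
Your implications $(2)\Rightarrow(1)$, $(1)\Rightarrow(2)$ and $(2)\Rightarrow(3)$ are sound, and in places tidier than the paper's (which proves $(1)\Rightarrow(2)$ by solving $Kz=y$ and $z'K=y$ directly from the four decompositions, and gets the invertibility of $1+\delta a\,a^\#$ by exhibiting explicit one--sided inverses built from $K^{-1}$, e.g. $TK^{-1}$ with $T=aK^{-1}\bar a^\#-a^\pi$). The genuine gap is in $(3)\Rightarrow(2)$. The commutation $\bar a^+\bar a=\bar a\bar a^+$ on which your whole last step rests is simply false under hypothesis $(3)$, and so is the identity $(1+a^\#\delta a)^{-1}(1-P)=(1-P)(1+\delta a\,a^\#)^{-1}$ to which you reduce it. The element $\bar a^+=(1+a^\#\delta a)^{-1}a^\#$ supplied by Proposition \ref{P1} is only a $\{1,2\}$--inverse of $\bar a$ and does not coincide with $\bar a^\#$ in general: the entire point of Theorem \ref{T1} is that $\bar a^\#$ has the more elaborate form $(1+C(a))(D(a)+D^2(a)\delta a\,a^\pi)(1-C(a))$, which collapses to $\bar a^+$ only in degenerate cases. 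Concretely, in $M_2(\mathbb C)$ take $a=\begin{bmatrix}1&0\\0&0\end{bmatrix}$, $\delta a=\begin{bmatrix}0&1\\0&0\end{bmatrix}$, $\bar a=\begin{bmatrix}1&1\\0&0\end{bmatrix}$. Then $a^\#=a$, $\bar a^\#=\bar a$, $1+\delta a\,a^\#=1_2$, $1+a^\#\delta a=\begin{bmatrix}1&1\\0&1\end{bmatrix}$, and $K=\bar a+a-1_2=\begin{bmatrix}1&1\\0&-1\end{bmatrix}$ is invertible, so $(2)$ and $(3)$ both hold; yet $\bar a^+=(1+a^\#\delta a)^{-1}a^\#=a$, whence $\bar a\bar a^+=a\neq\bar a=\bar a^+\bar a$, and $(1+a^\#\delta a)^{-1}(1-P)=\begin{bmatrix}0&-1\\0&1\end{bmatrix}\neq\begin{bmatrix}0&0\\0&1\end{bmatrix}=(1-P)(1+\delta a\,a^\#)^{-1}$. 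No amount of combining $(a)$, $(b)$ and Lemma \ref{L1} can therefore produce that identity.

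The repair is structural rather than computational: close the cycle as $(3)\Rightarrow(1)\Rightarrow(2)$, since you already have $(1)\Rightarrow(2)$. This is what the paper does. From $(3)$, Proposition \ref{P1} together with the factorizations $1+\delta a\,a^\#=\bar aa^\#+(1-aa^\#)$ and $1+a^\#\delta a=a^\#\bar a+(1-aa^\#)$ yields $\R=\bar a\R\dotplus(1-aa^\#)\R=\R\bar a\dotplus\R(1-aa^\#)$. The remaining two decompositions are then proved separately: the intersections vanish by a direct computation (e.g. $y\in a\R\cap(1-\bar a\bar a^\#)\R$ gives $(1+a^\#\delta a)y=(1-a^\#a)y=0$, so $y=0$), and the sums exhaust $\R$ because $(1-\bar a^+\bar a)\R\subseteq(1-\bar a^\#\bar a)\R$ while $a\R=a^\#\R$ and $(1+a^\#\delta a)a\R=a^\#a\R$, so that $a\R+(1-\bar a^\#\bar a)\R\supseteq(1+a^\#\delta a)^{-1}\bigl(a^\#a\R+(1-a^\#a)\R\bigr)=\R$. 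Your own idempotent argument for $(1)\Rightarrow(2)$ then finishes the proof.
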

\begin{proof}
$(1)\Rightarrow(2):$  Since $\R =\bar{a}\R \dotplus (1-aa^\#)\R =a\R \dotplus(1-\bar{a}\bar{a}^\#)\R $,
we have for any $y\in \R $, there are $y_1\in \R ,\,y_2\in \R $ such that
$$(1-\bar{a}\bar{a}^\#)y=(1-\bar{a}\bar{a}^\#)(1-aa^\#)y_1 ,\ \bar{a}\bar{a}^\#y=\bar{a}\bar{a}^\#aa^\#y_2.$$
Put $z=aa^\#y_2-(1-aa^\#)y_1$. Then
$$K(a,\bar{a})z=(\bar{a}\bar{a}^\#+aa^\#-1)(aa^\#y_2-(1-aa^\#)y_1)=y.$$

Since $\R =\R \bar{a}\dotplus \R (1-aa^\#)=\R a\dotplus \R (1-\bar{a}\bar{a}^\#)$,
we have for any $y \in \R $, there are $y_1,y_2\in \R$ such that
$$y(1-\bar{a}\bar{a}^\#)=y_1(1-aa^\#)(1-\bar{a}\bar{a}^\#), y\bar{a}\bar{a}^\#=y_2aa^\#\bar{a}\bar{a}^\#.$$
Put $z=y_2aa^\#-y_1(1-aa^\#)$. Then
$$zK(a,\bar{a})=(y_2aa^\#-y_1(1-aa^\#))(\bar{a}\bar{a}^\#+aa^\#-1)=y.$$
The above indicates $K(a,\bar{a})$ is invertible when we take $y=1$.

$(2)\Rightarrow(3):$
Let $y\in \bar{a}\R \cap (1-aa^\#)\R $. Then $\bar{a}\bar{a}^\#y=y,\,a^\# y=0$. Thus $K(a,\bar{a})y=0$ and hence
$y=0$, that is, $\bar{a}\R \cap (1-aa^\#)\R =\{0\}$. Similarly, we have $\R \bar{a}\cap \R (1-aa^\#)=\{0\}$.

Let $T=aK^{-1}\bar{a}^\#-a^\pi$. Since $\bar{a}a^\#aK^{-1}=\bar{a}$, we have  $(1+\delta a a^\#)T=K$,
that is, $(1+\delta a a^\#)$ has right inverse $TK^{-1}$.

Since $K^{-1}aa^\#\bar{a}=\bar{a}$, we have $(\bar{a}^\#K^{-1}a-a^\pi)(1+a^\# \delta a)=K$, that is, $1+a^\# \delta a$ has left
inverse $K^{-1}(\bar{a}^\#K^{-1}a-a^\pi)$. This indicates that $1+\delta a a^\#$ has left inverse
$1-\delta a K^{-1}(\bar{a}^\#K^{-1}a-a^\pi)a^\#$ by Lemma \ref{LL1}. Finally, $1+\delta a a^\#$ is invertible.

$(3)\Rightarrow (1):$ By Lemma \ref{L1}, $1+a^\#\delta a$ is also invertible. So from
$$
1+\delta aa^\#=\bar aa^\#+(1-aa^\#),\ 1+a^\#\delta a=a^\#\bar a+(1-aa^\#)
$$
and Lemma \ref{P1}, we get that
$$
\R =\bar a \R \dotplus(1-aa^\#)\R =\R \bar a\dotplus \R (1-aa^\#).
$$

We now prove that
$$
\R =a\R +(1-\bar{a}\bar{a}^\#)\R =\R a+\R (1-\bar{a}\bar{a}^\#),\ a\R \cap(1-\bar{a}\bar{a}^\#)\R =\R a\cap \R (1-\bar{a}\bar{a}^\#)=\{0\}.
$$

For any $y\in a\R \cap(1-\bar{a}\bar{a}^\#)\R $, we have $aa^\#y=y,\,\bar{a}y=0$. So
$(1+a^\#\delta a)y$ $=(1-a^\# a)y=0$ and hence $y=0$.

Similarly, we have $\R a\cap \R (1-\bar{a}\bar{a}^\#)=\{0\}$.

By Lemma \ref{P1}, $\bar a^+=(1+a^\#\delta a)^{-1}a^\#$ and $\bar a^+\bar a=(1+a^\#\delta a)^{-1}a^\#a(1+a^\#\delta a)$.
So $(1-\bar a^+\bar a)\R=(1+a^\#\delta a)^{-1}(1-a^\#a)\R$.
From $(1-\bar a^\#\bar a)(1-\bar a^+\bar a)=1-\bar a^+\bar a$, we get that $(1-\bar a^+\bar a)\R\subset
(1-\bar a^\#\bar a)\R$. Note that $a\R =a^\# \R $ and $(1+a^\#\delta a)a\R =a^\#(1+\delta aa^\#)\R =a^\# a\R $. So
$$
\R\supset a\R +(1-\bar a^\#\bar a)\R\supset (1+a^\#\delta a)^{-1}a^\# a\R +(1+a^\#\delta a)^{-1}(1-a^\# a)\R =\R .
$$

Similarly, we can get $\R a+\R (1-\bar a^\#\bar a)=\R $.
\end{proof}

Now we present a theorem which can be viewed as the inverse of Theorem \ref{T1} as follows:
\begin{theorem}\label{T2}
Let $a\in \R $ and $\bar{a}=a+\delta a \in \R $ such that $a^\#$, $\bar{a}^\#$ exist.
If $K(a,\bar{a})$ is invertible, then $\Phi(a)$ is invertible.
\end{theorem}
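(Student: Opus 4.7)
The plan is to reduce invertibility of $\Phi(a)$ in $\R$ to invertibility of its corner $F := P\Phi(a)P$ in $P\R P$ (where $P = aa^\#$), and then to use the invertibility of $K$ to force this corner to be invertible via a Schur-complement argument applied to the conjugated element $(1-C(a))K(1+C(a))$.

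I would begin by noting that $\Phi(a)(1-P) = 1-P$ (since the final factor in $\delta a\,a^\pi\delta a[(1+a^\#\delta a)^{-1}a^\#]^2$ ends in $a^\#$, which kills $1-P$), so $\Phi(a)$ is lower block-triangular with diagonal blocks $F$ and $1-P$; hence $\Phi(a)$ is invertible in $\R$ iff $F$ is invertible in $P\R P$. Proposition \ref{P5} gives the stable perturbation $\bar{a}\R\cap(1-P)\R=\{0\}$ and the invertibility of $1+\delta a\,a^\#$ and (via Lemma \ref{L1}) of $1+a^\#\delta a$, placing us in the setting of the computation in the proof of Theorem \ref{T1}. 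The key identity derived there,
\[
T:=(1-C(a))\bar{a}(1+C(a)) = P\delta a(1-P) + \alpha,\qquad \alpha := P\Phi(a)\bar{a}P,
\]
uses only these hypotheses and so remains valid. Also $C(a)^2=0$, so $(1-C(a))(1+C(a))=1$ and $T$ is group invertible with $T^\# = (1-C(a))\bar{a}^\#(1+C(a))$.

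Lemma \ref{L2}(2) applied to $T$ yields $\alpha^\#$ and the identity $\alpha\alpha^\#\delta a(1-P)=P\delta a(1-P)$; Lemma \ref{L2}(1) then gives $T^\# = \alpha^\# + (\alpha^\#)^2 P\delta a(1-P)$, and a direct multiplication produces
\[
TT^\# = \epsilon + \alpha^\#\delta a(1-P) = (1-C(a))\bar{P}(1+C(a)) \qquad (\epsilon:=\alpha\alpha^\#,\ \bar{P}:=\bar{a}\bar{a}^\#).
\]
Combining with $(1-C(a))P(1+C(a)) = P - C(a)$ produces
\[
(1-C(a))K(1+C(a)) = \epsilon + \alpha^\#\delta a(1-P) + P - C(a) - 1.
\]
The left-hand side is invertible. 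Decomposing the right-hand side with respect to $P$ (using $PC(a)=0$, $C(a)P=C(a)$, and $\alpha^\#\in P\R P$), it has block-matrix form
\[
\begin{pmatrix} \epsilon & \alpha^\#\delta a(1-P) \\ -C(a) & -(1-P)\end{pmatrix},
\]
whose $(2,2)$-block is invertible in $(1-P)\R(1-P)$; hence the Schur complement relative to this block is invertible in $P\R P$.

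The heart of the proof is a short block calculation simplifying this Schur complement. Writing $\delta a = (\delta_{ij})$ in the $P$-block decomposition and setting $G := P\bar{a}P$, one computes $\Phi(a)$ block-by-block and finds $F = P + \delta_{12}\delta_{21}G^{-2}$, so that $\alpha = FG = G + \delta_{12}\delta_{21}G^{-1}$; equivalently $\delta_{12}\delta_{21}G^{-1} = \alpha - G$. The Schur complement therefore collapses telescopically to
\[
\epsilon - \alpha^\#\delta_{12}\delta_{21}G^{-1} = \epsilon - \alpha^\#(\alpha - G) = \alpha^\#G.
\]
Thus $\alpha^\#G$ is invertible in $P\R P$. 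The block-triangular structure of $1+a^\#\delta a$ (diagonals $a^\#G$ and $1-P$) forces $G$ to be invertible in $P\R P$; hence $\alpha^\#$ itself is invertible there, and the identity $\alpha^\#\alpha\alpha^\# = \alpha^\#$ then forces $\epsilon = P$. Consequently $\alpha$ is invertible in $P\R P$, $F = \alpha G^{-1}$ is invertible in $P\R P$, and $\Phi(a)$ is invertible in $\R$. The main obstacle is spotting the telescoping identity $\delta_{12}\delta_{21}G^{-1} = \alpha-G$ that makes the Schur complement collapse to $\alpha^\#G$; once this simplification is in hand, the remaining passage from $\alpha^\#$ invertible to $\epsilon = P$ is routine.
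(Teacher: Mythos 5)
Your argument is correct, and while its first half coincides with the paper's, its second half takes a genuinely shorter route. Up to the point where the invertibility of $(1-C(a))K(1+C(a))$ forces the invertibility of $\rho(a)=\epsilon-\alpha^\#\delta a\,C(a)$ in $P\R P$ (your Schur complement is exactly the paper's $\rho(a)$, with $\alpha=PBaP$), you are retracing the paper's proof. The paper then only extracts from $\rho(a)$ a \emph{right} inverse of $\Phi(a)$ and must run an entire mirror-image computation -- conjugating by $1\pm E(a)$ with $E(a)=a^\#(1+\delta a a^\#)^{-1}\delta a a^\pi$, introducing $\psi(a)$ and $R=(1+a^\#\delta a)\psi(a)$ -- to obtain a left inverse, finishing via Lemma \ref{LL1}. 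You avoid that second computation entirely by the observation that, with $G=P\bar aP$ (invertible in $P\R P$ with inverse $(1+a^\#\delta a)^{-1}a^\#$) and $F=P\Phi(a)P=P+\delta_{12}\delta_{21}G^{-2}$, one has $\alpha=FG$ and hence $\delta_{12}\delta_{21}G^{-1}=\alpha-G$, which telescopes $\rho(a)$ to $\alpha^\#G$; invertibility of $\alpha^\#$ in $P\R P$ then gives $\epsilon=P$ and the two-sided invertibility of $\alpha$, $F$ and $\Phi(a)$ in one stroke. I checked the supporting identities ($P\Phi(a)(1-P)=0$, $(1-P)C(a)=C(a)=C(a)P$, $\alpha^\#\delta a\,C(a)=\alpha^\#\delta_{12}\delta_{21}G^{-1}$, $GD_0=D_0G=P$) and they all hold. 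Your version is roughly half the length, and it yields the extra information that $PBaP$ is actually invertible in $P\R P$ (equivalently $\bar a\bar a^\#$ is conjugate to $aa^\#$ via $1+C(a)$), which the paper's argument does not make explicit.
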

\begin{proof}
Since $K(a,\bar{a})$ is invertible, we have $\bar{a}\R \cap (1-aa^\#)\R =\{0\}$ and $1+\delta aa^\#$ is invertible in $\R $
by Propositon \ref{P5}.
Thus, from the proof of Theorem \ref{T1}, we have
\begin{align*}
(1-C(a))\bar{a}(1+C(a))&=P\delta a(1-P)+P\Phi(a)(1+ \delta aa^\#)aP\\
&=PBaP+P\delta a(1-P).
\end{align*}
Since $(1-C(a))\bar{a}(1+C(a))$ is group invertible, it follows from Lemma \ref{L2} (2) that $PBaP$ is group invertible
and $PBa(PBa)^\# \delta a (I-P)= P\delta a(I-P).$ Consequently,
\begin{align*}
[(1-C(a))\bar{a}(1+C(a))]^\#&=(1-C(a))\bar{a}^\#(1+C(a))\\
&=(PBa)^\#P+((PBa)^\#)^2\delta a(1-P).
\end{align*}
Thus,
\begin{align*}
(1-C(a))\bar{a}\bar{a}^\#(1+C(a))&=[PBaP+P\delta a(1-P)][(PBa)^\#P+((PBa)^\#)^2\delta a(1-P)]\\
&=PBa(PBa)^\#P+(PBa)^\#\delta a(1-P)\\
(1-C(a))K(a,\bar{a})(1+C(a))&=(1-C(a))\bar{a}\bar{a}^\#(1+C(a))-(1-C(a))a^\pi(1+C(a))\\
&=(1-C(a))\bar{a}\bar{a}^\#(1+C(a))-a^\pi(1+C(a))\\
&=PBa(PBa)^\#P+P(PBa)^\#\delta a(1-P)-(1-P)C(a)P-(1-P)
\end{align*}
Since $(1-C(a))K(a,\bar{a})(1+C(a))$ is invertible, we get that
$$
\rho(a)=(PBa)^\#PBa-(PBa)^\#\delta a C(a)=PBa[(PBa)^\#]^2(PBa-\delta a C(a))
$$
is invertible in $P\R P$. So we have $P=PBa[(Ba)^\#]^2(Ba-\delta a C(a))\rho^{-1}(a)$ and that $\Phi(a)$ has right inverse.

Set $E(a)=a^\#(1+\delta aa^\#)^{-1}\delta aa^\pi$. Then $1-E(a)=P+(1+a^\#\delta a)^{-1}a^\pi$ and $(1-E(a))^{-1}=
1+E(a)$. From Lemma \ref{P1}, we have
$\bar{a}(1+a^\#\delta a)^{-1}a^\pi=0$ and
\begin{align*}
a^\#(1+\delta aa^\#)^{-1}\bar{a}&=(1+a^\#\delta a)^{-1}a^\#\bar{a}=(1+a^\#\delta a)^{-1}(1+a^\#\delta a-a^\pi)\\
&=1-(1+a^\#\delta a)^{-1}a^\pi.
\end{align*}
Put $\psi(a)=1+[(1+a^\#\delta a)^{-1}a^\#]^2\delta aa^\pi\delta a$ and $R =(1+a^\#\delta a)\psi(a)$. Then
$(1-P)\psi(a)=1-P$, $P\psi(a)P=\psi(a)P$ and
\begin{align*}
&(1+E(a))\bar{a}(1-E(a))\\
&=[1+a^\#(1+\delta aa^\#)^{-1}\delta aa^\pi]\bar{a}[P+(1+a^\#\delta a)^{-1}a^\pi]\\
&=[1+a^\#(1+\delta aa^\#)^{-1}\delta aa^\pi]\bar{a}P\\
&=\bar{a}P+a^\#(1+\delta aa^\#)^{-1}\delta aa^\pi\bar{a}P\\
&=\bar{a}P+a^\#(1+\delta aa^\#)^{-1}\delta aa^\pi\delta aP\\
&=aP+\delta aP+a^\#(1+\delta aa^\#)^{-1}\delta aa^\pi\delta aP\\
&=(1-P)\delta aP+P\delta aP+aP+a^\#(1+\delta aa^\#)^{-1}\delta aa^\pi\delta aP\\
&=(1-P)\delta aP+P[\bar{a}+a^\#(1+\delta aa^\#)^{-1}\delta aa^\pi\delta a]P\\
&=(1-P)\delta aP+P(1+\delta aa^\#)[(1+\delta aa^\#)^{-1}\bar{a}+(1+\delta aa^\#)^{-1}a^\#(1+\delta aa^\#)^{-1}\delta aa^\pi\delta a]P\\
&=(1-P)\delta aP+Pa(1+a^\#\delta a)[a^\#(1+\delta aa^\#)^{-1}\bar{a}+a^\#(1+\delta aa^\#)^{-1}a^\#(1+\delta aa^\#)^{-1}\delta aa^\pi\delta a]P\\
&=(1-P)\delta aP+Pa(1+a^\#\delta a)[a^\#(1+\delta aa^\#)^{-1}\bar{a}+[(1+a^\#\delta a)^{-1}a^\#]^2\delta aa^\pi\delta a]P\\
&=(1-P)\delta aP+Pa(1+a^\#\delta a)[1+[(1+a^\#\delta a)^{-1}a^\#]^2\delta aa^\pi\delta a]P\\
&=(1-P)\delta aP+PaR P.
\end{align*}

Since $(1+E(a))\bar{a}(1-E(a))$ is group invertible, we can deduce that $aRP$ is group invertible and
\begin{align*}
[(1+E(a))\bar{a}(1-E(a))]^\#&=(1+E(a))\bar{a}^\#(1-E(a))\\
&=P(aRP)^\#+(1-P)\delta a((aRP)^\#)^2
\end{align*}
and
\begin{align*}
(1+E(a))\bar{a}\bar{a}^\#(1-E(a))&=[(1-P)\delta aP+PaR P][P(aRP)^\#+(1-P)\delta a((aRP)^\#)^2]\\
&=PaR (aRP)^\#+(1-P)\delta a(aRP)^\#.
\end{align*}
Thus, from the invertibility of $K(a,\bar{a})$, we get that
\begin{align*}
(1+E(a))&K(a,\bar{a})(1-E(a))\\
&=(1+E(a))\bar{a}\bar{a}^\#(1-E(a))-(1+E(a))a^\pi(1-E(a))\\
&=PaRP(aRP)^\#+(1-P)\delta aP(aRP)^\#-(1+E(a))a^\pi\\
&=PaRP(aRP)^\#+(1-P)\delta a(aRP)^\#-PE(a)(1-P)-(1-P)
\end{align*}
is invertible in $\R $ and hence
\begin{align*}
\eta(a)&=aRP (aRP)^\#-E(a)\delta a(aRP)^\#=[aRP -E(a)\delta a][(aRP)^\#]^2 aRP \\
&=[aRP -E(a)\delta a][(aRP)^\#]^2a(1+a^\#\delta a)\psi(a)P
\end{align*}
is invertible in $P\R P$. So $P\psi(a)P$ is left invertible and $\psi(a)$ is left invertible and hence $\Phi(a)$ is left
invertible by Lemma \ref{LL1}. Therefore, $\Phi(a)$ is invertible.
\end{proof}

Let $a\in \R $ such that $a^D$ exists and $ind(a)=s$. As we know if $a^D$ exists, then $a^l$ has group inverse $(a^l)^\#$
and $a^D=(a^l)^\# a^{l-1}$ for any $l\geq s$.

From Theorem \ref{T1} and Theorem \ref{T2}, we have the following corollary:
\begin{corollary}
Let $a$ and $b$ be nonzero elements in $R$ such that $a^D$ and $b^D$ exist. Put $s=ind(a)$ and $t=ind(b)$. Suppose that
$K(a,b)=bb^D+aa^D-1$ is invertible in $\R $. Then for any $l\geq s$ and $k\geq t$, we have
\begin{enumerate}
  \item[$(1)$]  $1+(a^D)^l(b^k-a^l)$ is invertible in $\R $ and $b^k \R \cap(1-a^D a)\R=\{0\}$.
  \item[$(2)$]  $W_{k,l}=1+E _{k,l}Z_{k,l}(1+(a^D)^l E _{k,l})^{-1}(a^D)^l$ is invertible in $\R $, here $E _{k,l}=b^k-a^l$ and
                 $Z_{k,l}=a^\pi E _{k,l}(a^D)^l(1+E _{k,l}(a^D)^l )^{-1}.$
  \item[$(3)$]  $b^D=(1+Z_{k,l})[H_{k,l}+H^2_{k,l}E _{k,l}a^\pi](1-Z_{k,l})b^{k-1}$,
  where $H_{k,l}=(1+(a^D)^l E _{k,l})^{-1}(a^D)^lW^{-1}_{k,l}.$
\end{enumerate}
\end{corollary}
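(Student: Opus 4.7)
The plan is to reduce the statement to a direct application of Theorem \ref{T1}, Theorem \ref{T2} and Proposition \ref{P5} by working with the \emph{group invertible} pair $\alpha:=a^l$, $\beta:=b^k$ in place of the only Drazin invertible pair $(a,b)$. Since $l\ge s=\mathrm{ind}(a)$ and $k\ge t=\mathrm{ind}(b)$, the elements $\alpha,\beta$ are group invertible with $\alpha^\#=(a^D)^l$ and $\beta^\#=(b^D)^k$; moreover the spectral idempotents are unchanged, $\alpha\alpha^\#=(aa^D)^l=aa^D$ (so $\alpha^\pi=a^\pi$) and $\beta\beta^\#=bb^D$. Setting $\delta=\beta-\alpha=E_{k,l}$, this gives
\[
K(\alpha,\beta)=\beta\beta^\#+\alpha\alpha^\#-1=bb^D+aa^D-1=K(a,b),
\]
which is invertible by hypothesis.

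With this setup, statement (1) follows directly from Proposition \ref{P5} applied to $(\alpha,\beta)$: the invertibility of $K(\alpha,\beta)$ delivers both $\beta\R\cap(1-\alpha\alpha^\#)\R=\{0\}$, i.e.\ $b^k\R\cap(1-a^Da)\R=\{0\}$, and the invertibility of $1+\delta\alpha^\#=1+E_{k,l}(a^D)^l$; Lemma \ref{L1} then upgrades this to invertibility of $1+\alpha^\#\delta=1+(a^D)^l E_{k,l}$.

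For (2) I would apply Theorem \ref{T2} to $(\alpha,\beta)$ to obtain invertibility of $\Phi(\alpha)$ and then verify that $\Phi(\alpha)=W_{k,l}$ by direct substitution. The decisive tool is the Lemma \ref{L1} identity
\[
(1+(a^D)^l E_{k,l})^{-1}(a^D)^l=(a^D)^l(1+E_{k,l}(a^D)^l)^{-1},
\]
which simultaneously rewrites $C(\alpha)=\alpha^\pi\delta(1+\alpha^\#\delta)^{-1}\alpha^\#$ as $Z_{k,l}$ and rewrites the square $[(1+\alpha^\#\delta)^{-1}\alpha^\#]^2$ in $\Phi(\alpha)$ into the mixed form that produces the factor $Z_{k,l}(1+(a^D)^l E_{k,l})^{-1}(a^D)^l$ defining $W_{k,l}$.

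For (3) I would feed the identifications $C(\alpha)=Z_{k,l}$, $\Phi(\alpha)=W_{k,l}$, $\alpha^\pi=a^\pi$ and $D(\alpha)=(1+\alpha^\#\delta)^{-1}\alpha^\#\Phi(\alpha)^{-1}=H_{k,l}$ into the formula of Theorem \ref{T1}, which gives
\[
(b^D)^k=\beta^\#=(1+Z_{k,l})\bigl(H_{k,l}+H_{k,l}^{\,2}E_{k,l}a^\pi\bigr)(1-Z_{k,l}).
\]
Multiplying on the right by $b^{k-1}$ and using the standard identity $(b^D)^k b^{k-1}=b^D$ for $k\ge t$ (which follows from commutativity of $b$ with $b^D$ together with the idempotency $b^k(b^D)^k=bb^D$) yields the claimed formula for $b^D$. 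I expect the main obstacle to be the bookkeeping in (2): one must patiently match $\Phi(\alpha)$ with $W_{k,l}$ symbol by symbol via the Lemma \ref{L1} swap so that the invertibility output of Theorem \ref{T2} is literally the invertibility of $W_{k,l}$; once this identification is in place, the rest of the corollary is just notational substitution into Theorems \ref{T1} and \ref{T2}.
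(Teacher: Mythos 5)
Your proposal is correct and takes essentially the same route as the paper: the published proof simply observes that $(a^D)^l=(a^l)^\#$, $aa^D=a^l(a^l)^\#$, $bb^D=b^k(b^k)^\#$ and then applies Proposition \ref{P5}, Theorem \ref{T1} and Theorem \ref{T2} to the group invertible pair $a^l$, $b^k$ with $\delta a=E_{k,l}$. The Lemma \ref{L1} bookkeeping you describe, identifying $C(a^l)=Z_{k,l}$, $\Phi(a^l)=W_{k,l}$, $D(a^l)=H_{k,l}$ and finishing with $b^D=(b^k)^\#b^{k-1}$, is exactly the detail the paper leaves implicit.
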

\begin{proof}
Noting that $(a^D)^l=(a^l)^\#,\ aa^D=a^l(a^l)^\#,\ bb^D=b^k(b^k)^\#, \ l\geq s,\ k\geq t,$
we have
$$
K(a,b)=b^k(b^k)^\#+a^l(a^l)^\#-1,\quad 1+(a^D)^l(b^k-a^l)=1+(a^l)^\#(b^k-a^l).
$$
Applying Theorem \ref{T1} and Theorem \ref{T2} to $b^k$ and $a^l$, we can get the assertions.
\end{proof}

\section{The representation of the group inverse of certain matrix on $\R$}

As an application of Theorem \ref{T1} and Theorem \ref{T2}, we study the representation of the group inverse of $\begin{bmatrix}d&b\\ c&0\end{bmatrix}$
on the ring $\R$.

\begin{proposition}\label{4P1}
Let $b,\, c,\, d\in \R $. Suppose that $b^\#$ and $c^\#$ exist and $k=b^\# b+c^\# c-1$ is
invertible. If $b^\pi d=0$ or $dc^\pi=0$, then $\begin{bmatrix}d&b\\ c&0\end{bmatrix}^\#$ exists and
$$
\begin{bmatrix}d&b\\ c&0\end{bmatrix}^\#=\begin{bmatrix}-k^{-1}c^\#k^{-1}b^\#k^{-1}dc^\pi k^{-1}&k^{-1}c^\#k^{-1}\\
k^{-1}b^\#k^{-1}(1+dk^{-1}c^\#k^{-1}b^\#k^{-1}dc^\pi k^{-1})&-k^{-1}b^\#k^{-1}dk^{-1}c^\#k^{-1}\end{bmatrix}
$$
if $b^\pi d=0$. When $dc^\pi=0$, we have
$$
\begin{bmatrix}d&b\\ c&0\end{bmatrix}^\#=
\begin{bmatrix}-k^{-1}b^\pi dk^{-1}c^\#k^{-1}b^\#k^{-1}&(1+k^{-1}b^\pi dk^{-1}c^\#k^{-1}b^\#d)k^{-1}c^\#k^{-1}\\
k^{-1}b^\#k^{-1}&-k^{-1}b^\#k^{-1}dk^{-1}c^\#k^{-1}\end{bmatrix}.
$$
\end{proposition}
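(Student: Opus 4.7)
The strategy is to realize $\bar a := \begin{bmatrix}d&b\\ c&0\end{bmatrix}$ as a stable perturbation $a + \delta a$ of $a := \begin{bmatrix}0&b\\ c&0\end{bmatrix}$ inside $M_2(\R)$, where $\delta a := \begin{bmatrix}d&0\\ 0&0\end{bmatrix}$, and then read off $\bar a^\#$ from Theorem \ref{T1}. Corollary \ref{c1} furnishes $a^\#$ under the invertibility of $k = b^\#b + c^\#c - 1$, and the same hypothesis forces commutation identities that will be used throughout: writing $p = b^\#b = bb^\#$ and $q = c^\#c = cc^\#$, one verifies $pk = kq$, $kp = qk$ (hence $pk^{-1} = k^{-1}q$), as well as $kb^\# = qb^\#$, $b^\# k = b^\# q$, and their $c^\#$-analogues.

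To invoke Theorem \ref{T1}, I would first check that $1_2 + a^\# \delta a$ is invertible. A direct computation gives $a^\#\delta a = \begin{bmatrix}0&0\\ k^{-1}b^\#k^{-1}d&0\end{bmatrix}$, which is strictly lower block-triangular, so $(1_2 + a^\#\delta a)^{-1} = \begin{bmatrix}1&0\\ -k^{-1}b^\#k^{-1}d&1\end{bmatrix}$. Next, using $b^\pi d = 0$ (equivalently $d = pd$), I would verify the stable perturbation condition $\bar a M_2(\R) \cap (1_2 - aa^\#) M_2(\R) = \{0\}$; the hypothesis $d = pd$ forces any element of this intersection to lie in $aa^\# M_2(\R)$ and hence to vanish. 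The same hypothesis reduces $a^\pi \delta a$ enough that $\Phi(a)$ becomes a unipotent block matrix, so it is invertible with an explicit inverse.

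With the hypotheses in place, Theorem \ref{T1} yields $\bar a^\# = (1_2 + C(a))(D(a) + D(a)^2 \delta a\, a^\pi)(1_2 - C(a))$. The core of the argument is then to expand this block-matrix product entry by entry, substitute the explicit forms of $C(a)$, $D(a)$, and $a^\pi$, and repeatedly apply the commutation identities together with $d = pd$ to collapse the result into the stated $2\times 2$ closed form. For the dual case $dc^\pi = 0$, no separate argument is needed: passing to the opposite ring $\R^{op}$ reverses multiplication and swaps the roles of $b$ and $c$, so that the hypothesis $b^\pi d = 0$ in $\R^{op}$ corresponds precisely to $dc^\pi = 0$ in $\R$; transposing the resulting formula produces the second expression. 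The main obstacle is computational rather than conceptual: the nested product in Theorem \ref{T1} unfolds into a great many terms, and the careful bookkeeping required to reduce them to the clean closed form of the proposition is the bulk of the work.
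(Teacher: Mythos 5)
Your proposal follows essentially the same route as the paper: decompose $\begin{bmatrix}d&b\\ c&0\end{bmatrix}=a+\delta a$ with $a=\begin{bmatrix}0&b\\ c&0\end{bmatrix}$, get $a^\#$ from Corollary \ref{c1}, verify that $1_2+a^\#\delta a$ is unipotent lower-triangular, that the stable perturbation condition holds, and that $\Phi(a)$ is invertible (the paper shows it is actually $1_2$, since $\delta a\,a^\pi\delta a=0$ under either hypothesis), and then expand the formula of Theorem \ref{T1}. The only variation is your treatment of the case $dc^\pi=0$ via the opposite ring $\R^{op}$, where the paper simply repeats the computation; that duality argument is valid and correctly produces the second formula from the first.
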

{\it Proof.\ } Set $a=\begin{bmatrix}0&b\\ c&0\end{bmatrix}$, $\delta a=\begin{bmatrix}d&0\\ 0&0\end{bmatrix}$
and $\bar a=\begin{bmatrix}d&b\\ c&0\end{bmatrix}$. Since $b^\#bk=kc^\#c$, $c^\#ck=kb^\#b$, it follows from
Corollary \ref{c1} that
\begin{align*}
1_2+a^\#\delta a&=1_2+\begin{bmatrix}0&b^\#bk^{-1}c^\#k^{-1}\\ c^\#ck^{-1}b^\#k^{-1}&0\end{bmatrix}
\begin{bmatrix}d&0\\ 0&0\end{bmatrix}=\begin{bmatrix}1&0\\ k^{-1}b^\#k^{-1}d&1\end{bmatrix}\\
(1_2+a^\#\delta a)^{-1}a^\#&=\begin{bmatrix}1&0\\ -k^{-1}b^\#k^{-1}d&1\end{bmatrix}
                           \begin{bmatrix}0&k^{-1}c^\#k^{-1}\\ k^{-1}b^\#k^{-1}&0\end{bmatrix}\\
&=\begin{bmatrix}0&k^{-1}c^\#k^{-1}\\ k^{-1}b^\#k^{-1}&-k^{-1}b^\#k^{-1}dk^{-1}c^\#k^{-1}\end{bmatrix}\\
aa^\#&=\begin{bmatrix}0&b\\ c&0\end{bmatrix}\begin{bmatrix}0&b^\#bk^{-1}c^\#k^{-1}\\ c^\#ck^{-1}b^\#k^{-1}&0\end{bmatrix}\\
&=\begin{bmatrix}bc^\#ck^{-1}b^\#k^{-1}&0\\ 0&cb^\#bk^{-1}c^\#k^{-1}\end{bmatrix}\\
&=\begin{bmatrix}bb^\#k^{-1}&0\\ 0&cc^\#k^{-1}\end{bmatrix}\\
a^\pi&=1-aa^\#=\begin{bmatrix}-c^\pi k^{-1}&0\\ 0&-b^\pi k^{-1}\end{bmatrix}\\
\bar{a}(1_2+a^\#\delta a)^{-1}a^\pi&=\begin{bmatrix}d&b\\ c&0\end{bmatrix}\begin{bmatrix}1&0\\ -c^\#ck^{-1}b^\#k^{-1}d&1\end{bmatrix}
                           \begin{bmatrix}-c^\pi k^{-1}&0\\ 0&-b^\pi k^{-1}\end{bmatrix}\\
&=\begin{bmatrix}d-bc^\#ck^{-1}b^\#k^{-1}d&b\\ c&0\end{bmatrix}\begin{bmatrix}-c^\pi k^{-1}&0\\ 0&-b^\pi k^{-1}\end{bmatrix}\\
&=\begin{bmatrix}-c^\pi k^{-1}d&b\\ c&0\end{bmatrix}\begin{bmatrix}-c^\pi k^{-1}&0\\ 0&-b^\pi k^{-1}\end{bmatrix}
=\begin{bmatrix}k^{-1}b^\pi dc^\pi k^{-1}&0\\ 0&0\end{bmatrix}
\end{align*}
and
\begin{align*}
\delta a a^\pi \delta a&=\begin{bmatrix}d&0\\ 0&0\end{bmatrix}\begin{bmatrix}-c^\pi k^{-1}&0\\ 0&-b^\pi k^{-1}\end{bmatrix}
                                 \begin{bmatrix}d&0\\ 0&0\end{bmatrix}=\begin{bmatrix}-dc^\pi k^{-1}d&0\\ 0&0\end{bmatrix}\\
a^\pi \delta a&=\begin{bmatrix}-k^{-1}b^\pi d&0\\ 0&0\end{bmatrix},\quad
\delta aa^\pi=\begin{bmatrix}-dc^\pi k^{-1}&0\\ 0&0\end{bmatrix}.
\end{align*}
If $b^\pi d=0$ or $d c^\pi=0$, then $\bar{a}(1+a^\#\delta a)^{-1}a^\pi=0$ and $\delta a a^\pi \delta a=0$. Thus,
$\Phi(a)=1_2$ and $D(a)=(1+a^\#\delta a)^{-1}a^\# \Phi^{-1}(a)=(1+a^\#\delta a)^{-1}a^\#$.

When $b^\pi d=0$, $C(a)=a^\pi \delta a(1+a^\#\delta a)^{-1}a^\#=0$ and
\begin{align*}
D(a)\delta a a^\pi&=\begin{bmatrix}0&k^{-1}c^\#k^{-1}\\ k^{-1}b^\#k^{-1}&-k^{-1}b^\#k^{-1}dk^{-1}c^\#k^{-1}\end{bmatrix}
\begin{bmatrix}-dc^\pi k^{-1}&0\\ 0&0\end{bmatrix}\\
&=\begin{bmatrix}0&0\\-k^{-1}b^\#k^{-1}dc^\pi k^{-1}&0\end{bmatrix}.
\end{align*}
By Theorem \ref{T1}, we have
\begin{align*}
\bar{a}^\#&=(1_2+C(a))(D(a)+D^2(a)\delta a a^\pi)(1_2-C(a))\\
&=D(a)(1_2+D(a)\delta a a^\pi)\\
&=\begin{bmatrix}0&k^{-1}c^\#k^{-1}\\ k^{-1}b^\#k^{-1}&-k^{-1}b^\#k^{-1}dk^{-1}c^\#k^{-1}\end{bmatrix}
\begin{bmatrix}1&0\\-k^{-1}b^\#k^{-1}dc^\pi k^{-1}&1\end{bmatrix}\\
&=\begin{bmatrix}a_1&k^{-1}c^\#k^{-1}\\ a_2&-k^{-1}b^\#k^{-1}dk^{-1}c^\#k^{-1}\end{bmatrix},
\end{align*}
where $a_1=-k^{-1}c^\#k^{-2}b^\# k^{-1}dc^\pi k^{-1}$,
$
a_2=k^{-1}b^\#k^{-1}+k^{-1}b^\#k^{-1}dk^{-1}c^\#k^{-2}b^\#k^{-1}dc^\pi k^{-1}.
$
Since $cc^\#b^\#=kb^\#$, it follows that
$$
c^\#k^{-2}b^\#=c^\#(c^\#c)k^{-1}k^{-1}b^\#=c^\# k^{-1}k^{-1}c^\#cb^\#=c^\#k^{-1}b^\#.
$$
So $a_1=-k^{-1}c^\#k^{-1}b^\# k^{-1}dc^\pi k^{-1}$, $a_2=k^{-1}b^\#k^{-1}(1+dk^{-1}c^\#k^{-1}b^\#k^{-1}dc^\pi k^{-1})$.

When $dc^\pi=0$, we have by Theorem \ref{T1},
\begin{align*}
\bar a^\#&=(1_2+C(a))D(a)(1_2-C(a))=(1_2+C(a))D(a)\\
&=\begin{bmatrix}1&-k^{-1}b^\pi dk^{-1}c^\#k^{-1}\\ 0&1\end{bmatrix}
\begin{bmatrix}0&k^{-1}c^\#k^{-1}\\ k^{-1}b^\#k^{-1}&-k^{-1}b^\#k^{-1}dk^{-1}c^\#k^{-1}\end{bmatrix}\\
&=\begin{bmatrix}-k^{-1}b^\pi dk^{-1}c^\#k^{-1}b^\#k^{-1}&(1+k^{-1}b^\pi dk^{-1}c^\#k^{-1}b^\#k^{-1}d)k^{-1}c^\#k^{-1}\\
k^{-1}b^\#k^{-1}&-k^{-1}b^\#k^{-1}dk^{-1}c^\#k^{-1}\end{bmatrix}.\qed
\end{align*}

Combining Proposition \ref{4P1} with Corollary \ref{c1}, we have
\begin{corollary}\label{c2}
Let $b,\, c,\, d\in \R $. Assume that $b^\#$ and $c^\#$ exist and satisfy conditions: $b^\# bc^\# c=b^\# b$,
$c^\# cb^\# b=c^\# c$.
\begin{enumerate}
\item[$(1)$] If $b^\pi d=0$, then
$
\begin{bmatrix}d&b\\ c&0\end{bmatrix}^\#=\begin{bmatrix}b^\#bc^\#b^\#db^\pi&b^\#bc^\#
\\ c^\#cb^\#(1-db^\#bc^\#b^\#db^\pi) &- c^\#cb^\#db^\#bc^\#\end{bmatrix}.
$
\item[$(2)$] If $dc^\pi=0$, then
$
\begin{bmatrix}d&b\\ c&0\end{bmatrix}^\#=\begin{bmatrix}b^\pi db^\#bc^\#b^\#&(1-b^\pi db^\#bc^\#b^\#d)b^\#bc^\#\\
c^\#cb^\# &-c^\#cb^\#db^\#bc^\#\end{bmatrix}.
$
\end{enumerate}
\end{corollary}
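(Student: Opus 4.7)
The plan is to deduce the corollary directly from Proposition \ref{4P1} by exploiting the strengthened hypotheses. First I would verify that the invertibility assumption of Proposition \ref{4P1} is met: the extra conditions $b^\# b c^\# c = b^\# b$ and $c^\# c b^\# b = c^\# c$ force $k^2 = (b^\# b + c^\# c - 1)^2 = 1$ by direct expansion (as already observed in the proof of Corollary \ref{c1}), so $k$ is invertible with $k^{-1}=k$. Thus both cases of Proposition \ref{4P1} apply, and the task reduces to rewriting each entry of the two matrices there in the compact form claimed by the corollary.

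Next I would derive a short list of absorption identities from the hypotheses. Multiplying $c^\# c b^\# b = c^\# c$ on the left by $c$ (and using $c c^\# c = c$) gives $c b^\# b = c$, i.e.\ $c b^\pi = 0$; multiplying on the left by $c^\#$ (and using $c^\# c c^\# = c^\#$) gives $c^\# b^\# b = c^\#$. The symmetric identities $b c^\pi = 0$ and $b^\# c^\# c = b^\#$ come from the other hypothesis in the same way. Combining these with the definition of $k$, a direct expansion yields
\[
b^\# k = b^\#, \quad k b^\# = c^\# c b^\#, \quad k c^\# k = b^\# b c^\#, \quad c^\pi k = -b^\pi, \quad k b^\pi = -b^\pi,
\]
together with their $b\leftrightarrow c$ transposes. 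These collapse every factor of the form $k^{-1} X k^{-1}$ appearing in Proposition \ref{4P1} to a single expression in $b,c,b^\#,c^\#,d,b^\pi,c^\pi$.

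Finally I would substitute into the two formulas of Proposition \ref{4P1}. In case $(1)$, where $b^\pi d = 0$, the $(1,2)$ entry immediately becomes $k^{-1} c^\# k^{-1} = b^\# b c^\#$ and the $(2,2)$ entry becomes $-c^\# c b^\# d\, b^\# b c^\#$; using $c^\pi k^{-1} = -b^\pi$ and $b^\# k^{-1} = b^\#$ the $(1,1)$ entry reduces to $b^\# b c^\# b^\# d b^\pi$, and the $(2,1)$ entry to $c^\# c b^\#\bigl(1 - d\, b^\# b c^\# b^\# d b^\pi\bigr)$. Case $(2)$, where $d c^\pi = 0$, is completely analogous by left-right symmetry, using $k^{-1} b^\pi = -b^\pi$ in place of $c^\pi k^{-1} = -b^\pi$. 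No new ideas beyond Proposition \ref{4P1} and the absorption identities above are required; the main (and only) difficulty is bookkeeping—specifically, one must be careful not to drop the idempotent factors $b^\# b$ or $c^\# c$ prematurely, since they genuinely persist in the final answer.
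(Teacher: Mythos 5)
Your proposal is correct and matches the paper's (essentially one-line) proof, which likewise just substitutes $k^{-1}=k$ and the resulting absorption identities into Proposition \ref{4P1}; all the identities you list ($b^\#k=b^\#$, $kc^\#k=b^\#bc^\#$, $c^\pi k=-b^\pi$, etc.) check out and yield exactly the stated entries. You have simply supplied the bookkeeping the paper omits.
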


Recall from \cite{RH} that an involution * on $\R$ is an involutory anti--automorphism, that is,
$$
(a^*)^*=a,\ (a+b)^*=a^*+b^*,\  (ab)^*=b^*a^*,\ a^*=0\ \text{if and only if}\ a=0
$$
and $\R$ is called the $*$--ring if $\R$ has an involution.

\begin{corollary}\label{c3}
Let $\R$ be a $*$--ring with unit $1$ and let $p$ be a nonzero idempotent element in $\R$. Then
$$
\begin{bmatrix}pp^*&p\\ p&0\end{bmatrix}^\#=\begin{bmatrix}pp^*(1-p)&p\\ p-(pp^*)^2(1-p)&-pp^*p\end{bmatrix},\quad
\begin{bmatrix}p^*p&p\\ p&0\end{bmatrix}^\#=\begin{bmatrix}(1-p)p^*p&p-(1-p)(p^*p)^2\\ p&-pp^*p\end{bmatrix}.
$$
\end{corollary}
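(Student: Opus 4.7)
The plan is to recognize both identities as direct specializations of Corollary \ref{c2}, applied with $b = c = p$ and with $d$ equal to $pp^*$ in the first case and $p^*p$ in the second. So the work divides into two very short checks: verifying the hypotheses of Corollary \ref{c2}, and simplifying the resulting $2\times 2$ block formulas using the idempotency $p^2 = p$.

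First I would set up the hypotheses of Corollary \ref{c2}. Since $p$ is idempotent, it is group invertible with $p^\# = p$, so $b^\# = c^\# = p$ and $b^\# b = c^\# c = p^2 = p$. Hence $b^\# b\, c^\# c = p\cdot p = p = b^\# b$, and symmetrically $c^\# c\, b^\# b = c^\# c$, which is exactly what Corollary \ref{c2} requires. Moreover $b^\pi = 1 - p$. For the first matrix ($d = pp^*$) I would use $b^\pi d = (1-p)pp^* = 0$, which activates part (1) of Corollary \ref{c2}. For the second matrix ($d = p^*p$) I would use $dc^\pi = p^*p(1-p) = 0$, which activates part (2).

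Next, plug into the two displayed formulas of Corollary \ref{c2} and simplify. Every factor of $b^\# b$, $c^\# c$, $b^\#$, $c^\#$, $b$, $c$ becomes simply $p$, so each entry reduces to an expression in $p$, $p^*$ and $1-p$. The simplifications I expect to carry out repeatedly are $p\cdot pp^* = pp^*$, $p^*p\cdot p = p^*p$, $p(pp^*)^2 = (pp^*)^2$, and $(p^*p)^2 p = (p^*p)^2$, each of which follows immediately from $p^2 = p$. For example, in case (1) the $(2,1)$ entry of Corollary \ref{c2} becomes
\[
p\bigl(1 - pp^*\cdot p\cdot p\cdot p\cdot pp^*(1-p)\bigr) \;=\; p - p(pp^*)^2(1-p) \;=\; p - (pp^*)^2(1-p),
\]
matching the claimed expression; the other three entries are analogous. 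The second identity is handled identically using part (2) of Corollary \ref{c2}.

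There is no real obstacle — the content is entirely mechanical once the right specialization is identified. The only thing to be careful about is keeping track of which side the factor of $p$ absorbs into, so that each block entry comes out in the exact normal form stated in the corollary; this is a matter of consistently using $p^2 = p$ to collapse adjacent $p$'s against the outer $pp^*$ or $p^*p$ factors.
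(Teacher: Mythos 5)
Your proposal is correct and is exactly the paper's route: the paper's proof is the one-line observation that $p^\#=p$ and then an appeal to Corollary \ref{c2} with $b=c=p$ and $d=pp^*$ (resp. $d=p^*p$), using $b^\pi d=(1-p)pp^*=0$ (resp. $dc^\pi=p^*p(1-p)=0$). Your entry-by-entry simplifications via $p^2=p$ all check out, so you have simply supplied the details the paper leaves to the reader.
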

\begin{proof}
Since $p^\#=p$, we can get the assertions easily by using Corollary \ref{c2}.
\end{proof}

\begin{remark}
{\rm (1) If $\R$ is a skew field and $b=c$ in Proposition \ref{4P1}, the conclusion of Proposition \ref{4P1} is contained
in \cite{ZB}.

(2) Let $p$ be an idempotent matrix. The group inverse of $\begin{bmatrix}a&b\\ c&0\end{bmatrix}$ is given in \cite{CT}
for some $a,\,b,\,c\in\{pp^*,\,p,\,p^*\}$. The group inverse of this type of matrices is also discussed in \cite{DC}.
}
\end{remark}

\vskip0.2cm
\noindent{\bf{Acknowledgement.}} The authors thank to the referee for his (or her) helpful comments and suggestions.

\end{document}